\documentclass[final,1p,times]{elsarticle}
\usepackage{mathrsfs}
\usepackage{amscd,bbm,amsmath,amsthm,graphicx,amssymb,mathrsfs,latexsym,dsfont,amsfonts,paralist,epsfig,graphics,exscale,ulem}
\usepackage[colorlinks=true]{hyperref}
\def\bx{\boldsymbol{x}}
\def\by{\boldsymbol{y}}
\newtheorem{Mthm}{Theorem}%

\newtheorem{theorem}{Theorem}[section]

\newtheorem{lemma}[theorem]{Lemma}

\theoremstyle{definition}

\newtheorem{defn}[theorem]{Definition}
\newtheorem*{definition}{Definition}

\journal{Mathematische Zeitschrift $($accepted$)$}
\begin{document}

\begin{frontmatter}

\title{On $\mathrm{C}^1$-class local diffeomorphisms whose periodic points are nonuniformly expanding\tnoteref{label1}}
\tnotetext[label1]{Project was supported partly by National Natural Science Foundation of China (No.~11071112) and PAPD of Jiangsu Higher Education Institutions.}%
 \author{Xiongping Dai}
 \address{Department of Mathematics, Nanjing University, Nanjing 210093, People's Republic of China}
 \ead{xpdai@nju.edu.cn}
\begin{abstract}
Using a sifting-shadowing combination, we prove in this paper that an arbitrary $\mathrm{C}^1$-class
local diffeomorphism $f$ of a closed manifold $M^n$ is uniformly expanding on the
closure $\mathrm{Cl}_{M^n}(\mathrm{Per}(f))$ of its periodic point set $\mathrm{Per}(f)$, if it is
nonuniformly expanding on $\mathrm{Per}(f)$.
\end{abstract}

\begin{keyword}{Nonuniformly expanding maps, sifting-shadowing combination}

\medskip

\MSC[2010]{37D05, 37D20, 37D25}

\end{keyword}
\end{frontmatter}
\section{Introduction}\label{sec1}%

We consider a discrete-time differentiable semi-dynamical system
\begin{equation*}
f\colon M^n\rightarrow M^n
\end{equation*}
which is a $\mathrm{C}^1$-class local diffeomorphism of a closed manifold
$M^n$, where $n\ge1$; that is to say, $f$ is surjective and for any $x\in M^n$, there is an open neighborhood $U_x$ around $x$ in $M^n$ such that $f$ is $\mathrm{C}^1$-class diffeomorphic restricted to $U_x$.

\subsection{Motivation}\label{sec1.1}%

A point $p\in M^n$ is said to be \textit{periodic with period} $\tau\ge1$, if $f^{\tau}(p)=p$. For a number of situations in smooth ergodic theory and
differentiable dynamical systems, the ``nonuniform hyperbolicity" of
its periodic point set, written as $\mathrm{Per}(f)$, is often proven or assumed;
for example, see the classical works~\cite{Fr71, L80, Man82, Man87, Aok92, Hay92, GW}. Then,
extending the hyperbolicity from the periodic points to the whole manifold or the closure of $\mathrm{Per}(f)$ is a
deep and important problem.

In this paper, we are concerned with the study of conditions for a
nonuniformly expanding endomorphism to be uniformly expanding. There have been a
few results concerning this. One of these results is the remarkable
Theorem~A of Ma\~{n}\'{e}~\cite{Man85} for
$\mathrm{C}^{1+{\tiny\textrm{H\"{o}lder}}}$ endomorphisms of the unit circle $\mathbb{T}^1$; some
interesting other results for $\mathrm{C}^1$-class local diffeomorphisms of $M^n$ where $n\ge2$, have appeared in several
recent papers \cite{AAS,Cao,CLR,COP,Dai09-2}.

\subsection{Basic concepts}\label{sec1.2}%
Before we pursue a further discussion, let us first recall some basic concepts. As usual, by $\textmd{Diff}_{\textrm{loc}}^1(M^n)$ we denote the set of all
$\mathrm{C}^1$-class local diffeomorphisms of the closed manifold $M^n$, equipped with the usual $\mathrm{C}^1$-topology.

By an abuse of notation, $\|\cdot\|_{\mathrm{co}}$ means the co-norm
(also called minimum norm) defined in the following way: for any $f\in\textmd{Diff}_{\textrm{loc}}^1(M^n)$,
\begin{equation*}
\|D_{\!x}f^\ell\|_{\mathrm{co}}=\min_{v\in T_{\!x}{M^n}, \|v\|=1}\|(D_{\!x}f^\ell)v\|
\end{equation*}
for the derivatives $D_{\!x}f^\ell\colon T_{\!x}M^n\rightarrow T_{\!f^\ell(x)}M^n$ for all $x\in
M^n$ and $\ell\ge1$. Since $f$ is locally diffeomorphic, $\|D_{\!x}f^\ell\|_{\mathrm{co}}=\|(D_{\!x}f^\ell)^{-1}\|^{-1}>0$. On the other hand, we have
\begin{equation*}
\|D_{\!x}f^{\ell+m}\|_{\mathrm{co}}\ge\|(D_{\!x}f^\ell)\|_{\mathrm{co}}\cdot\|(D_{\!f^\ell(x)}f^m\|_{\mathrm{co}}
\end{equation*}
for any $x\in M^n$ and any $\ell,m\ge1$.

For any point $x\in M^n$, let
\begin{equation*}
\lambda_{\min}(x,f)=\limsup_{\ell\to+\infty}\frac{1}{\ell}\log\|D_{\!x}f^\ell\|_{\mathrm{co}}
\end{equation*}
be the \textit{minimal Lyapunov exponent} of $f$ at the base point $x$. Our question considered here is this: If $\lambda_{\min}(x,f)>0$ for all $x\in\mathrm{Per}(f)$, whether $f$ is expanding on the closure of $\mathrm{Per}(f)$. Let us first see an example. We denote by $\{0,1\}^\mathbb{N}$ the compact topological space of all the one-sided infinite sequences $i_{\boldsymbol{\cdot}}\colon\mathbb{N}\rightarrow\{0,1\}$ and let $\boldsymbol{S}(\alpha,\gamma)=\{S_0,S_1\}$ where
\begin{equation*}
S_0=\alpha\left[\begin{array}{cc}1&-1\\0&1\end{array}\right],\quad S_1=\gamma\left[\begin{array}{cc}1&0\\-1&1\end{array}\right]\qquad (\alpha,\gamma>1).
\end{equation*}
Based on \cite{BM02}, there exists a pair of $\alpha,\gamma$ such that for every periodic sequence $i_{\boldsymbol{\cdot}}\in\{0,1\}^\mathbb{N}$
\begin{equation*}
\lambda_{\min}(i_{\boldsymbol{\cdot}},\boldsymbol{S}(\alpha,\gamma))=\lim_{n\to+\infty}\frac{1}{n}\log\|S_{i_1}\cdots S_{i_n}\|_{\mathrm{co}}>0,
\end{equation*}
but the linear cocycle, associated to $\boldsymbol{S}(\alpha,\gamma)$ and driven by the Markov shift $\theta\colon i_{\boldsymbol{\cdot}}\mapsto i_{\boldsymbol{\cdot}+1}$, is not expanding on $\{0,1\}^\mathbb{N}$, although all the periodic sequences $i_{\boldsymbol{\cdot}}$ form a dense subset of $\{0,1\}^\mathbb{N}$.

This example motivates us to have to strengthen condition for uniformly expanding. The basic condition that
we study in this paper is described as follows.

\begin{definition}
We say that $f\in \mathrm{Diff}_{\mathrm{loc}}^1(M^n)$ is
{\it nonuniformly expanding} on a subset $\Lambda\subseteq M^n$ not
necessarily closed, if there can be found a number $\lambda>0$ such
that
\begin{equation*}
\limsup_{k\to+\infty}\frac{1}{k}\sum_{i=0}^{k-1}\log\|D_{\!f^i(x)}f\|_{\mathrm{co}}\ge\lambda
\end{equation*}
for all $x\in\Lambda$. Here the constant $\lambda$ is called an \textit{expansion indicator} of $f$ at $\Lambda$.
\end{definition}

This is similar to what Alves, Bonatti and Viana has defined in \cite{ABV}. Recall that for an arbitrary ergodic measure $\mu$ of $f$, based on the Kingman subadditive ergodic theorem~\cite{King} one can introduce the {\it minimal Lyapunov exponent of $f$
restricted to $\mu$} by
\begin{equation*}
\lambda_{\min}(\mu,f)=\lim_{\ell\to+\infty}\frac{1}{\ell}\log\|D_{\!x}f^\ell\|_{\mathrm{co}}\qquad\mu\textrm{-a.e. }x\in M^n.
\end{equation*}
It is worthwhile noting here that restricted to a subset $\Lambda$, the nonuniformly expanding
property of $f$ is more stronger than the condition that $f$
has only Lyapunov exponents $\lambda_{\min}(\mu,f)$, which are positive and uniformly bounded away from zero, for all ergodic measures $\mu$ in $\Lambda$. So, if $f$ is nonuniformly expanding on $\Lambda$, then every ergodic measure $\mu$ of $f$
distributed in $\Lambda$ has only positive Lyapunov exponent $\lambda_{\min}(\mu,f)$. But,
because here $\Lambda$ is not necessarily a closed subset of $M^n$
such as $\Lambda=\mathrm{Per}(f)$, it is possible that there exists
some ergodic measure $\mu$ which is just supported on the boundary
$\partial\Lambda$, not in $\Lambda$, and which cannot be a-priori approximated
arbitrarily by periodic measures even in the case $\Lambda=\mathrm{Per}(f)$. This prevents us from using the
expanding criteria already developed, for example, in~\cite{AAS,Cao} and \cite[Lemma I-5]{Man87}, to prove that
$f$ is uniformly expanding on the closure $\mathrm{Cl}_{M^n}(\Lambda)$ of $\Lambda$ in $M^n$.

\subsection{Main results and outlines}\label{sec1.3}%

Our principal result obtained in this paper can be stated as
follows, which will be proved in Section~\ref{sec5.1} based on a series of lemmas developed in Sections~\ref{sec2}, \ref{sec3} and \ref{sec4}.

\begin{Mthm}\label{thm1}
Let $f\colon M^n\rightarrow M^n$ be a $\mathrm{C}^1$-class local diffeomorphism on the closed manifold $M^n$,
which is nonuniformly expanding on its periodic point set
$\mathrm{Per}(f)$. Then, there hold the following statements.
\begin{description}
\item[(1)] $f$ is uniformly expanding on the closure
$\mathrm{Cl}_{M^n}(\mathrm{Per}(f))$, i.e., there can be found two numbers
$C>0$ and $\lambda>0$ such that
\begin{equation*}
\|(D_{\!x}f^k)v\|\ge C\|v\|\exp(k\lambda)
\end{equation*}
for all $v\in T_{\!x}{M^n}, x\in\mathrm{Cl}_{M^n}(\mathrm{Per}(f))$ and $k\ge 1$.

\item[(2)] For an arbitrary ergodic measure $\mu$ of $f$, either $\lambda_{\min}(\mu,f)\le0$, or $\lambda_{\min}(\mu,f)\ge\lambda$ and
$\mathrm{supp}(\mu)\subseteq\mathrm{Cl}_{M^n}(\mathrm{Per}(f))$.

\item[(3)] If additionally $\mathrm{Per}(f)$ is dense in the
nonwandering point set $\Omega(f)$ of $f$, then $f$ is uniformly expanding on $M^n$.
\end{description}
\end{Mthm}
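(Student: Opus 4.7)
The plan is to prove (1) by a contradiction argument assembled from the sifting--shadowing combination developed in Sections~\ref{sec2}--\ref{sec4}, and then to derive (2) and (3) from it.

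\emph{Plan for~(1).} I would negate uniform expansion on $K:=\mathrm{Cl}_{M^n}(\mathrm{Per}(f))$: for every $\lambda'\in(0,\lambda)$ and every $N\ge1$ one must be able to find $x\in K$ and $n\ge N$ with $\|D_{\!x}f^n\|_{\mathrm{co}}<\exp(n\lambda')$. Supermultiplicativity of the co-norm, recorded in the excerpt, then forces the corresponding Birkhoff-type average to obey
\begin{equation*}
\frac{1}{n}\sum_{i=0}^{n-1}\log\|D_{\!f^i(x)}f\|_{\mathrm{co}}<\lambda'.
\end{equation*}
Feeding such defective segments into the sifting machinery of Sections~\ref{sec2}--\ref{sec3} and taking a weak-$*$ accumulation point of the empirical measures $\frac{1}{n}\sum_{i=0}^{n-1}\delta_{f^i(x)}$, I would produce an $f$-invariant Borel probability $\mu_0$ supported in $K$ with $\int\log\|Df\|_{\mathrm{co}}\,d\mu_0\le\lambda'$, and then, by ergodic decomposition, an ergodic component $\mu$ with the same bound. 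At a $\mu$-typical $y$ the Birkhoff average of $\log\|Df\|_{\mathrm{co}}$ converges to that integral, and the closing/shadowing step of Section~\ref{sec4} should produce a periodic point $p$ of period $\tau$ whose orbit $C^0$-tracks that of $y$ long enough that uniform continuity of $\log\|Df\|_{\mathrm{co}}$ gives
\begin{equation*}
\frac{1}{\tau}\sum_{i=0}^{\tau-1}\log\|D_{\!f^i(p)}f\|_{\mathrm{co}}<\lambda,
\end{equation*}
contradicting the hypothesis that $\lambda$ is an expansion indicator of $f$ on $\mathrm{Per}(f)$.

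\emph{Plan for~(2) and~(3).} For~(2), let $\mu$ be ergodic with $\lambda_{\min}(\mu,f)>0$; at $\mu$-typical points the co-norm of $Df^{\ell}$ grows exponentially, so the same $C^1$-closing procedure of Section~\ref{sec4} yields periodic points arbitrarily close to them, whence $\mathrm{supp}(\mu)\subseteq K$, and then part~(1) lifts the exponent to $\lambda_{\min}(\mu,f)\ge\lambda$. For~(3), the density hypothesis gives $\Omega(f)\subseteq K$, so~(1) yields uniform expansion on $\Omega(f)$. Every forward orbit of $f$ accumulates on $\Omega(f)$; since $f$ is a local diffeomorphism of the compact manifold $M^n$, the finite initial segment of any orbit before it enters a fixed small neighbourhood of $\Omega(f)$ contributes only a uniform multiplicative constant, so that the bound $\|D_{\!x}f^k\|_{\mathrm{co}}\ge C\exp(k\lambda)$ persists globally on $M^n$.

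\emph{Anticipated main obstacle.} The delicate point is the shadowing in~(1). Neither uniform hyperbolicity nor the $C^{1+\alpha}$ regularity behind Katok's closing theorem is at our disposal, so there is no off-the-shelf way to convert a defective invariant measure on $K$ into a defective periodic orbit. The strategy, as I read it, must arrange the sifting so that each selected orbit segment simultaneously carries a low Birkhoff average of $\log\|Df\|_{\mathrm{co}}$ \emph{and} enough sideways expansion to be closed up by a nearby periodic orbit whose Birkhoff data are quantitatively inherited from the segment. Keeping these two controls compatible throughout the limiting procedure, and doing so purely in the $C^1$ category, is where I expect the real technical weight of Sections~\ref{sec2}--\ref{sec4} to lie.
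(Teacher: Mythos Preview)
Your plan for~(1) has a genuine gap, and it is precisely the one the paper warns against in Section~\ref{sec1.3}. You propose to pass from defective orbit segments to a weak-$*$ limit measure $\mu_0$ on $K=\mathrm{Cl}_{M^n}(\mathrm{Per}(f))$, extract an ergodic component $\mu$ with $\int\log\|Df\|_{\mathrm{co}}\,d\mu<\lambda$, and then close up a $\mu$-typical orbit to a periodic one with the same low average. But the closing lemma available here (Theorem~\ref{thm2.1}) only shadows \emph{quasi-expanding} pseudo-orbits, and a $\mu$-typical orbit for a measure with \emph{small} integral of $\log\|Df\|_{\mathrm{co}}$ has no reason to be quasi-expanding; indeed, it is exactly the opposite situation from Theorem~\ref{thm2.3}, where $\lambda_{\min}(\mu,f)>0$ is assumed. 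There is no a-priori mechanism to approximate an arbitrary ergodic measure on $K$ by periodic ones, as the paper explicitly notes. Your final paragraph correctly identifies what is really needed---segments that are simultaneously quasi-expanding \emph{and} carry an abnormally low average---but your plan does not produce such segments; it produces a measure and then hopes to close.

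The paper's route (Theorem~\ref{thm4.1}) bypasses measures entirely. Working in the natural extension, it uses Ma\~{n}\'{e}'s obstruction/germ dichotomy (Lemma~\ref{lem3.6}) together with a \emph{multi-scale} sifting (a decreasing ladder of thresholds $\gamma_1^{(i)}>\gamma_2^{(i)}>\bar\gamma_2^{(i)}>\gamma_3^{(i)}$, $i=0,\dots,s$) to build, by hand, a periodic $\hat\gamma$-quasi-expanding $\delta$-pseudo-orbit whose total average is nevertheless below $\bar\gamma+\eta<\gamma''$. Theorem~\ref{thm2.1} then shadows it by a genuine periodic orbit inheriting both properties, yielding the contradiction. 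The key technical idea you are missing is Lemma~\ref{lem3.4}: from a long $\gamma_0$-string containing an $(\mathfrak n,\gamma_2)$-obstruction one can extract a $\gamma_3$-quasi-expanding string that is \emph{not} a $\gamma_1$-string, i.e.\ quasi-expanding yet with controlled (low) average.

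Your argument for~(3) also has a gap: the time for an orbit to enter a fixed neighbourhood of $\Omega(f)$ need not be uniform in $x$, and orbits may repeatedly leave such a neighbourhood, so ``finite initial segment contributes a uniform constant'' is unjustified. The paper instead invokes Ma\~{n}\'{e}'s criterion \cite[Lemma~I-5]{Man87}: once $f$ is uniformly expanding on $\Omega(f)$, every ergodic measure (being supported on $\Omega(f)$) satisfies the required integral bound, and the criterion then gives uniform expansion on all of $M^n$. Your plan for~(2) is essentially correct and matches the paper's use of Theorem~\ref{thm2.3}.
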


The statement (1) of Theorem~\ref{thm1} is closely related to an important theorem of Ma\~{n}\'{e}~\cite[Theorem~$\mathrm{II}$-1]{Man87}, which essentially reads as follows: If $f\in\mathrm{Diff}^1(M^n)$ preserves a homogeneous dominated splitting $T_{\!\Delta}M^n=E\oplus F$ where $\Delta=\mathrm{Cl}_{M^n}(\mathrm{Per}(f))$, such that the bundle $E$ is contracted by $Df$ and at every periodic point $p$,
\begin{equation*}
\limsup_{k\to+\infty}\frac{1}{k}\sum_{i=0}^{k-1}\log\|(D_{\!{f^i(p)}}f)|F\|_{\mathrm{co}}\ge\lambda
\end{equation*}
for some uniform constant $\lambda>0$, then $f$ is (uniformly) expanding along $F$ on $\Delta$. However, Ma\~{n}\'{e}'s theorem does not apply directly to our situation studied here, since $f$ is not a diffeomorphism. In addition, the statement (1) of Theorem~\ref{thm1} is proved by Castro, Oliveira and Pinheiro~\cite{COP} in the special case where $f$ possesses the closing by periodic orbits property, and by Sun and Tian~\cite{ST} in the generic case.

For any $f\in\mathrm{Diff}_{\mathrm{loc}}^1(M^n)$, by definition, the nonuniformly expanding for $f$ on $\mathrm{Per}(f)$ is equivalent to the property that
there exists a constant $\lambda>0$ such that
\begin{equation*}
\int_{M^n}\log\|D_{\!x}f\|_{\mathrm{co}}\,\mu(dx)\ge\lambda
\end{equation*}
{\it for all ergodic measures $\mu$ of $f$ supported on periodic orbits}. However, from R.~Ma\~{n}\'{e}~\cite[Lemma I-5]{Man87} it follows that, $f$ is (uniformly) expanding on $\mathrm{Cl}_{M^n}(\mathrm{Per}(f))$
if and only if there exists an integer $m\ge1$ and a constant $\lambda^\prime>0$ such that
\begin{equation*}
\int_{M^n}\log\|D_{\!x}f^m\|_{\mathrm{co}} \mu(dx)\ge\lambda^\prime
\end{equation*}
{\it for all ergodic measures $\mu$ of $f$ supported on $\mathrm{Cl}_{M^n}(\mathrm{Per}(f))$}.
Since $\mathrm{Per}(f)$ does not need to be closed in $M^n$ and there is no
a-priori generic condition, like closing by periodic orbits property, for the restriction of $f$ to
$\mathrm{Cl}_{M^n}(\mathrm{Per}(f))$ to ensure that each ergodic measure of $f$ distributed on $\mathrm{Cl}_{M^n}(\mathrm{Per}(f))$ can be arbitrarily approximated by periodic ones, Ma\~{n}\'{e}'s criterion above does not work here.
We will prove the uniformly expanding property by employing
a Liaowise ``sifting-shadowing combination" motivated by
S.-T.~Liao~\cite{L80} and R.~Ma\~{n}\'{e}~\cite{Man87}.

To overcome the
non-invertibility of $f$, we will introduce the natural extension of
$f$ in Section~\ref{sec3}. The idea of the proof of Theorem~\ref{thm1} is that if $f$ had not been uniformly
expanding on $\mathrm{Cl}_{M^n}(\mathrm{Per}(f))$ then, using the natural extension of $f$ and a sifting lemma (Pliss lemma),
we would construct an ``abnormal" quasi-expanding pseudo-orbit string
of $f$ in $\mathrm{Cl}_{M^n}(\mathrm{Per}(f))$. Further, a shadowing lemma (Theorem~\ref{thm2.1}) enables us to find an ``abnormal" periodic orbit $P$ whose minimal
Lyapunov exponent $\lambda_{\min}(P,f)$ can approach arbitrarily to zero (Theorem~\ref{thm4.1}), which
contradicts the nonuniformly expanding property of $f$ on $\mathrm{Per}(f)$. As our sifting-shadowing
combination, here we use the Pliss lemma (Lemma~\ref{lem2.2}) and the shadowing lemma (Theorem~\ref{thm2.1}) proved in Appendix in Section~\ref{sec6}.

In the context of the stability conjecture of Palis and Smale, Pliss~\cite{Pli}, Liao~\cite{L80} and Ma\~{n}\'{e}~\cite{Man82} were independently led to the notion of dominated splitting of the tangent bundle into two subbundles: one of them is definitely more contracted (or less expanded) than the other, after a uniform number of iterates.
Recall from \cite{L80,BDV} that for any $f\in\mathrm{Diff}_\mathrm{loc}^1(M^n)$ and $\eta>0$, we say $f$ has an ``$(\eta,1)$-dominated splitting" over $\mathrm{Cl}_{M^n}(\mathrm{Per}(f))$, provided that there exists a constant $C>0$ and $Df$-invariant decomposition of $T_{\!\mathrm{Cl}_{M^n}(\mathrm{Per}(f))}M^n$ into two subbundles
\begin{equation*}
T_{\!x}M^n=E(x)\oplus F(x)\quad \textrm{with }\dim E(x)=1\qquad\forall x\in\mathrm{Cl}_{M^n}(\mathrm{Per}(f))
\end{equation*}
such that
\begin{equation*}
\frac{\|D_{\!x}f^k|{E(x)}\|}{\;\;\,\|D_{\!x}f^k|{F(x)}\|_{\mathrm{co}}}\le C\exp(-2\eta k)\qquad\forall k\ge1.
\end{equation*}
By choosing an adapted norm, there is no loss of generality in assuming $C=1$ for simplicity.

As a result of Theorem~\ref{thm1}, we will obtain the following statement in Section~\ref{sec5.2}.

\begin{Mthm}\label{thm2}
Let $f\colon M^n\rightarrow M^n$ be a $\mathrm{C}^1$-class local diffeomorphism where $n\ge2$, and assume $f$ possesses an $(\eta,1)$-dominated splitting over $\mathrm{Cl}_{M^n}(\mathrm{Per}(f))$.
If every $p\in\mathrm{Per}(f)$ have only positive Lyapunov exponents and such exponents are uniformly bounded away from $0$,
then $f$ is uniformly expanding on $\mathrm{Cl}_{M^n}(\mathrm{Per}(f))$.
\end{Mthm}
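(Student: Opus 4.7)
The plan is to reduce Theorem~\ref{thm2} to Theorem~\ref{thm1} by applying the latter to a suitable iterate $f^m$. A direct application to $f$ itself fails because, when the dominated splitting is very skew, the single-step quantity $\log\|D_{\!x}f\|_{\mathrm{co}}$ can be much smaller than the expansion rate $\log\|D_{\!x}f|_{E(x)}\|$ along the slow line bundle $E$; however, iterating $f$ enough times lets the asymptotic domination take over.

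Set $\Delta:=\mathrm{Cl}_{M^n}(\mathrm{Per}(f))$. My first step is to introduce an adapted Riemannian metric on $T_{\!\Delta}M^n$ in which $E\perp F$ at every point; such a metric exists because the dominated splitting is continuous on the compact set $\Delta$. Decomposing a tangent vector as $v=v_E+v_F$ and exploiting both the orthogonality and the domination inequality $\|D_{\!x}f^k|_{E(x)}\|\le e^{-2\eta k}\|D_{\!x}f^k|_{F(x)}\|_{\mathrm{co}}$, a short calculation shows that in the adapted metric the one-dimensional direction $E$ realises the infimum of $\|D_{\!x}f^kv\|/\|v\|$, so
\begin{equation*}
\|D_{\!x}f^k\|_{\mathrm{co}}=\|D_{\!x}f^k|_{E(x)}\|\qquad\forall\,x\in\Delta,\ k\ge1.
\end{equation*}
Equivalence of the two metrics on the compact set $\Delta$ then supplies a constant $c^*>0$, independent of $x$ and $k$, such that $\|D_{\!x}f^k\|_{\mathrm{co}}\ge c^*\|D_{\!x}f^k|_{E(x)}\|$ in the original norm.

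At every $p\in\mathrm{Per}(f)$ of $f$-period $\tau$, the Lyapunov exponent along the one-dimensional invariant bundle $E$ is $\lambda_E(p)=\tau^{-1}\log\|D_{\!p}f^\tau|_{E(p)}\|$; by the domination this is the smallest Lyapunov exponent at $p$, and the hypothesis yields $\lambda_E(p)\ge\lambda_0>0$ uniformly. For $m\ge1$, let $s=\tau/\gcd(m,\tau)$ be the $f^m$-period of $p$. Since $E$ is a $1$-dim invariant line and $\tau\mid ms$, multiplicativity gives
\begin{equation*}
\frac{1}{s}\sum_{i=0}^{s-1}\log\|D_{\!f^{im}(p)}f^m|_{E}\|=\frac{1}{s}\log\|D_{\!p}f^{ms}|_{E}\|=m\lambda_E(p),
\end{equation*}
whence, by Step~1,
\begin{equation*}
\limsup_{k\to+\infty}\frac{1}{k}\sum_{i=0}^{k-1}\log\|D_{\!(f^m)^i(p)}f^m\|_{\mathrm{co}}\ge\log c^*+m\lambda_E(p)\ge\log c^*+m\lambda_0.
\end{equation*}
Choosing $m$ so large that $\log c^*+m\lambda_0>0$, one obtains that $f^m$ is nonuniformly expanding on $\mathrm{Per}(f^m)=\mathrm{Per}(f)$.

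Applying Theorem~\ref{thm1}(1) to $f^m$ then gives uniform expansion of $f^m$ on $\mathrm{Cl}_{M^n}(\mathrm{Per}(f^m))=\mathrm{Cl}_{M^n}(\mathrm{Per}(f))$, and a routine interpolation $n=qm+r$ together with the uniform lower bound $\inf_{y\in M^n}\|D_{\!y}f\|_{\mathrm{co}}>0$ coming from compactness and the local diffeomorphism assumption transfers the conclusion to $f$ itself. The main difficulty I anticipate is Step~1: one needs the comparison $\|D_{\!x}f^k\|_{\mathrm{co}}\ge c^*\|D_{\!x}f^k|_{E(x)}\|$ with a constant \emph{simultaneously} uniform in $x\in\Delta$ and $k\ge1$, since otherwise a $k$-dependent error would accumulate in the Birkhoff averages for $f^m$ and prevent the uniform choice of $m$ across $\mathrm{Per}(f)$. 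The adapted-metric computation is precisely what neutralises this error, at the cost of having to pass to iterates.
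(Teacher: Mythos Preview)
Your proposal is correct and follows essentially the same route as the paper: pass to an iterate $f^m$, use the $(\eta,1)$-domination to identify the total co-norm of $D f^m$ with its action on the one-dimensional bundle $E$ (up to a uniform constant), note that along a line bundle the Lyapunov exponent is computed additively so the hypothesis yields nonuniform expansion of $f^m$ on $\mathrm{Per}(f^m)=\mathrm{Per}(f)$, and then invoke Theorem~\ref{thm1}. Your adapted-metric computation in Step~1 makes explicit exactly the point the paper compresses into the single sentence ``Hence $Df^m|E$ and then $Df^m$ are nonuniformly expanding,'' and your final interpolation $n=qm+r$ is the standard way to descend from $f^m$ back to $f$; the paper leaves that implicit as well.

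One small caution: your Step~1 claims $\|D_{\!x}f^k\|_{\mathrm{co}}=\|D_{\!x}f^k|_{E(x)}\|$ in the orthogonal adapted metric for \emph{every} $k\ge1$, but this equality needs $\|D_{\!x}f^k|_{E}\|\le\|D_{\!x}f^k|_{F}\|_{\mathrm{co}}$ in that same metric, and passing to the orthogonal metric may reintroduce a multiplicative constant in the domination so that the inequality only holds for $k\ge k_0$. This is harmless for your argument since you only use the comparison at the single scale $k=m$ and are free to take $m\ge k_0$ (and $c^*$, coming from the metric-equivalence constants, is independent of $m$), but the statement of Step~1 as written is slightly too strong.
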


Finally, in Section~\ref{sec5.3}, we will apply Theorem~\ref{thm1} stated above
to a $\mathrm{C}^1$-class local diffeomorphism of the circle $\mathbb{T}^1$; see Theorem~\ref{thm3} below.
\section{Closing property of recurrent quasi-expanding orbit
strings}\label{sec2}

To apply a sifting-shadowing combination, we need first to introduce
a suitable shadowing lemma and a sifting lemma for local
diffeomorphisms of the closed manifold $M^n$. For that, we have to introduce two notions:
``$\lambda$-quasi-expanding orbit-string" and ``shadowing property" of
quasi-expanding pseudo-orbit.
\subsection{Closing up quasi-expanding strings}\label{sec2.1}
Consider an arbitrary $f\in
\textmd{Diff}_{\mathrm{loc}}^1(M^n)$. Recall that for any
$\lambda>0$, an ordered segment of orbit of $f$ of length $k$
\begin{equation*}
\left(x,f^k(x)\right):=\left(x,f(x),\ldots,f^k(x)\right)\qquad
(k\ge 1)
\end{equation*}
is called a {\it $\lambda$-quasi-expanding orbit-string of $f$} if
\begin{equation*}
\frac{1}{\ell}\sum_{j=1}^{\ell}\log\|D_{\!f^{k-j}(x)}f\|_{\mathrm{co}}\geq\lambda
\qquad\forall \ell=1, \ldots, k.
\end{equation*}

As a complement to Liao's shadowing lemma~\cite{L79}, we could obtain the
following shadowing lemma.

\begin{theorem}\label{thm2.1}
Given any $f\in\mathrm{Diff}_{\mathrm{loc}}^1(M^n)$ and any two numbers
$\varepsilon>0$ and $\lambda>0$, there exists a number
$\delta=\delta(\varepsilon,\lambda)>0$ such that, if all
$\left(x_i,f^{n_i}(x_i)\right),\ i=0,\ldots,k$, are
$\lambda$-quasi-expanding orbit-strings satisfying
$\textsl{d}\left(f^{n_i}(x_i),x_{i+1}\right)<\delta$ for all $0\le
i\le k$ where $x_{k+1}=x_0$, then there can be found a periodic
point $\bx$ of $f$ with period $\tau_{\bx}=n_0+\cdots+n_k$ verifying
\begin{equation*}
\textsl{d}(f^{n_{-1}+\cdots+n_{i-1}+j}(\bx),f^j(x_{i}))<\varepsilon\qquad
0\le j\le n_{i},\ 0\le i<k
\end{equation*}
and
\begin{equation*}
\frac{1}{\ell}\sum_{j=1}^{\ell}\log\|D_{\!f^{\tau_{\bx}-j}(\bx)}f\|_{\mathrm{co}}\geq\lambda-\varepsilon\qquad\forall \ell=1,
\ldots, \tau_{\bx}.
\end{equation*}
\end{theorem}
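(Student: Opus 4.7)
The plan is to close up the chain by inverting $f$ along the quasi-expanding segments and running a contraction-mapping argument. First I would rewrite the quasi-expanding condition as an estimate on backward branches: summing the defining inequality and using the submultiplicativity of the co-norm recorded in Section~\ref{sec1.2} yields
\begin{equation*}
\|D_{\!f^{n_i-\ell}(x_i)}f^\ell\|_{\mathrm{co}}\ \ge\ \exp(\ell\lambda)\qquad (1\le\ell\le n_i),
\end{equation*}
which means that the local inverse branch $h_i$ of $f^{n_i}$ sending $f^{n_i}(x_i)$ back to $x_i$ is a contraction of rate at most $\exp(-n_i\lambda)$ at $f^{n_i}(x_i)$. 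Using compactness of $M^n$ and uniform continuity of $Df$, I would fix an auxiliary $0<\lambda_0<\lambda$ and a radius $r=r(\varepsilon,\lambda_0)>0$ so small that, on every $r$-ball centred on a point of the pseudo-orbit, the relevant inverse branches exist, still contract by at most $\exp(-n_i\lambda_0)$, and the function $\log\|Df\|_{\mathrm{co}}$ varies by less than $\varepsilon/2$.

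Next, choose $\delta$ much smaller than $r(1-\exp(-\lambda_0))$ and define the composition
\begin{equation*}
H:=h_0\circ h_1\circ\cdots\circ h_k
\end{equation*}
on the $r$-ball $B$ centred at $x_0$. The successive jumps of length $<\delta$ between $f^{n_i}(x_i)$ and $x_{i+1}$ are absorbed geometrically by the successive contractions, so $H(B)\subset B$ and $H$ is a strict contraction on $B$. The unique fixed point $\bx$ is then a periodic point of $f$ of period $\tau_{\bx}=n_0+\cdots+n_k$, and its orbit, computed as the images of $\bx$ under reversals of the branches, automatically $\varepsilon$-shadows the pseudo-orbit piece by piece, giving the first conclusion of the theorem.

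Finally, the quasi-expanding condition along $\bx$ follows from the one along the pseudo-orbit, up to a loss of $\varepsilon$. For any length $\ell\le\tau_{\bx}$ traversed backward from $\bx$, one splits $\ell$ into complete subsegments plus a partial tail and sums the quasi-expanding inequalities for the corresponding $x_i$; this yields a lower bound of $\ell\lambda$ along the pseudo-orbit, and shifting to $\bx$ then loses at most $\varepsilon\ell/2$ by uniform continuity of $\log\|Df\|_{\mathrm{co}}$ combined with the $\varepsilon$-shadowing. The main technical obstacle is the uniformity of $\delta$ in both $k$ and in the lengths $n_i$: compactness of $M^n$ handles the base-point dependence, while the fact that each $h_i$ contracts by a definite factor $\exp(-n_i\lambda_0)$ makes the total drift telescope to a sum bounded by $\delta/(1-\exp(-\lambda_0))$, independently of $k$. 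Keeping all estimates consistent with $H(B)\subset B$ and with $\varepsilon$-shadowing requires a careful ordering in which $\lambda_0<\lambda$, then $r$, and finally $\delta$ are fixed; this bookkeeping is the technical heart of the argument.
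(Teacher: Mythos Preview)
Your argument is essentially correct and it is genuinely different from the route the paper takes. The paper lifts the pseudo-orbit to the tangent spaces via exponential charts, then invokes Liao's combinatorial lemma (Lemma~\ref{lem7.2}) to manufacture a well-adapted string $(c_j)$ that converts the \emph{quasi}-expanding string $(\|H_j\|_{\mathrm{co}})$ into a genuinely expanding one; only then does it run a fixed-point argument, and it does so in the full product Banach space $Y=\prod_j X_j$ following Gan's scheme (Lemma~\ref{lem7.1}). Your approach stays on the manifold, works backward through the inverse branches $h_i$, and applies the contraction mapping theorem on a single ball in $M^n$; no renormalizing sequence is needed because the quasi-expanding inequality is precisely the statement that every partial backward composite contracts. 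This is more elementary and avoids Liao's lemma entirely; the price is that the paper's framework transports verbatim to situations with a dominated splitting (where one cannot simply invert $f$), whereas yours is specific to the fully expanding case.

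One point deserves to be made explicit in your write-up. You assert, citing only compactness and uniform continuity of $Df$, that a single radius $r$ can be chosen so that each $h_i$ is defined on the $r$-ball about $f^{n_i}(x_i)$ and contracts there by $\exp(-n_i\lambda_0)$, uniformly in $n_i$. Compactness alone does not give this: a single backward step can expand if $\|D_{f^{n_i-j}(x_i)}f\|_{\mathrm{co}}<1$ for some intermediate $j$. What saves you is again the quasi-expanding condition: tracking the radius step by step, after $\ell$ backward steps the ball about $f^{n_i-\ell}(x_i)$ has radius at most $r\exp\bigl(-\sum_{j=1}^{\ell}\log\|D_{f^{n_i-j}(x_i)}f\|_{\mathrm{co}}\bigr)\le r\,e^{-\ell\lambda_0}$, hence never exceeds $r$. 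This inductive check (and the analogous one that delivers $\varepsilon$-shadowing at the intermediate times $N_i+j$) is the one place where the bookkeeping is delicate; once you insert it, the rest of your outline goes through as written.
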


\noindent Here $n_{-1}=0$ and $\textsl{d}(\cdot,\cdot)$ is an arbitrarily
preassigned natural metric on $M^n$.

In fact, following the ideas of \cite{Kat80, Dai-TMJ}, one can further obtain an $(\varepsilon,\rho)$-exponential closing property under this situation. Here the proof of this theorem is standard following \cite{G}; see Appendix below for the
details.
\subsection{The Pliss lemma}\label{sec2.2}%
For our sifting lemma, we shall apply the following
reformulation of a result due to V.~Pliss.

\begin{lemma}[\cite{Pli}]\label{lem2.2}
Let $H>0$ be arbitrarily given. For any $\gamma>\gamma^\prime>0$, there exists an
integer $N_{\gamma,\gamma^\prime}\ge 1$ and a real number
$c_{\gamma,\gamma^\prime}\in(0,1)$ such that, if $(a_0,\ldots,
a_{m-1})$ with $m\ge N_{\gamma,\gamma^\prime}$ and $|a_i|\le H$ for all $0\le i<m$, is
a ``$\gamma$-string" in the sense that
\begin{equation*}
\frac{1}{m}\sum_{i=0}^{m-1}a_i\geq\gamma,
\end{equation*}
then there can be found integers $0<n_1<\cdots<n_k\le m$ with $k\ge\max\{1, mc_{\gamma,\gamma^\prime}\}$ such that
$(a_0,\ldots,a_{n_i-1})$ is a ``$\gamma^\prime$-quasi-expanding
string" for all $1\le i\le k$, i.e.,
\begin{equation*}
\frac{1}{J}\sum_{j=1}^{J}a_{n_i-j}\geq\gamma^\prime\qquad
\forall J=1,\ldots,n_i.
\end{equation*}
\end{lemma}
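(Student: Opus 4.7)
The plan is to adapt the classical Pliss argument via partial sums. Introduce
\begin{equation*}
S_n = \sum_{i=0}^{n-1}(a_i - \gamma^\prime),\qquad 0 \le n \le m,
\end{equation*}
with $S_0 = 0$. The $\gamma$-string hypothesis reads $S_m \ge m(\gamma - \gamma^\prime)$, while the desired $\gamma^\prime$-quasi-expanding property at an index $n$,
\begin{equation*}
\frac{1}{J}\sum_{j=1}^{J} a_{n-j} \ge \gamma^\prime\qquad\forall\, 1 \le J \le n,
\end{equation*}
is, after the substitution $i = n - j$, exactly the family of inequalities $S_n \ge S_l$ for every $0 \le l \le n-1$. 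So the proof reduces to producing enough \emph{good} indices, meaning $n \in \{1, \ldots, m\}$ at which $S_n = \max_{0 \le l \le n} S_l$.

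Let $n_1 < n_2 < \cdots < n_k$ be the good indices in $\{1, \ldots, m\}$, and set $n_0 = 0$. The key observation I would verify carefully is that on each block $n_{j-1} \le l < n_j$ the running maximum is constant and equal to $S_{n_{j-1}}$: no new maximum is attained there, by definition of good. In particular $S_{n_j - 1} \le S_{n_{j-1}}$, so the one-sided bound $a_{n_j - 1} \le H$ yields the per-jump estimate
\begin{equation*}
S_{n_j} - S_{n_{j-1}} = \bigl(S_{n_j - 1} - S_{n_{j-1}}\bigr) + (a_{n_j - 1} - \gamma^\prime) \le H - \gamma^\prime.
\end{equation*}

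Telescoping across $j = 1, \ldots, k$, together with $S_{n_k} \ge S_m$ — trivial when $m$ is itself good, and otherwise a consequence of the same ``frozen running maximum'' observation applied on the block $[n_k, m]$ — produces
\begin{equation*}
m(\gamma - \gamma^\prime) \le S_m \le S_{n_k} \le k(H - \gamma^\prime),
\end{equation*}
hence $k \ge m(\gamma - \gamma^\prime)/(H - \gamma^\prime)$. I would then set $c_{\gamma,\gamma^\prime} := (\gamma - \gamma^\prime)/(H - \gamma^\prime) \in (0,1)$ (legitimate since $H \ge \gamma > \gamma^\prime > 0$) and $N_{\gamma,\gamma^\prime} := \lceil 1/c_{\gamma,\gamma^\prime}\rceil$. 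The bound $k \ge 1$ is automatic because $\max_{0 \le n \le m} S_n \ge m(\gamma - \gamma^\prime) > 0$ is attained at some $n \ge 1$, and such an $n$ is necessarily good.

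The argument is essentially bookkeeping, and the only step that I consider genuinely \emph{content} — the one I would write out most carefully — is the running-maximum identity on each block $[n_{j-1}, n_j)$. That identity is what converts the \emph{global} lower bound on the average into a \emph{per-jump} upper bound of $H - \gamma^\prime$ on $S_{n_j} - S_{n_{j-1}}$, and hence into the desired linear lower bound on $k$. Note that only the one-sided bound $a_i \le H$ is used; the two-sided hypothesis $|a_i| \le H$ is there to make the constants $c_{\gamma,\gamma^\prime}$ and $N_{\gamma,\gamma^\prime}$ explicit.
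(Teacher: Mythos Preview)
The paper does not give its own proof of Lemma~\ref{lem2.2}; it simply cites the result from Pliss~\cite{Pli} and then records, in the paragraph following the statement, the specific choice of $H$ and $a_i$ that will be used later. There is therefore nothing in the paper to compare your argument against.

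Your argument itself is the standard Pliss partial-sum proof and is correct. The translation of the $\gamma'$-quasi-expanding condition into ``$S_n$ is a running maximum'' is right, the block estimate $S_{n_j}-S_{n_{j-1}}\le H-\gamma'$ follows exactly as you wrote, and the telescoping together with $S_m\le S_{n_k}$ gives $k\ge m(\gamma-\gamma')/(H-\gamma')$. Two minor remarks: (i) your formula $c_{\gamma,\gamma'}=(\gamma-\gamma')/(H-\gamma')$ lies in $(0,1)$ only when $H>\gamma$ strictly; if $H=\gamma$ you get $c=1$, so to match the statement you should take, say, $c_{\gamma,\gamma'}=(\gamma-\gamma')/(H-\gamma'+1)$ or simply replace $H$ by any larger constant before writing the formula; (ii) your closing comment is slightly off---the lower bound $a_i\ge -H$ is genuinely unused anywhere, so the two-sided hypothesis is not needed even to make the constants explicit.
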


We note here that in the above Pliss lemma, the numbers $N_{\gamma,\gamma^\prime}$ and
$c_{\gamma,\gamma^\prime}$ both depend on the preassigned constant $H$. For our applications later, we will consider the special case
where
\begin{equation*}
H=\max\left\{|\log\|D_{\!x}f\|_{\mathrm{co}}|;\ x\in
M^n\right\}
\quad \textrm{and}\quad
a_i=\log\|D_{\!f^i(x)}f\|_{\mathrm{co}}
\end{equation*}
for a local diffeomorphism $f$ and $x\in M^n$.

By a so-called \textit{sifting-shadowing combination}, we mean a combinatorial
application of a sifting lemma like Lemma~\ref{lem2.2} and a
shadowing lemma like Theorem~\ref{thm2.1}. It is an effective strategy to prove
hyperbolicity in differentiable dynamical systems, see~\cite{L80, GW, Dai11} for example.

\subsection{Existence of periodic repellers}\label{sec2.3}%
Using Theorem~\ref{thm2.1} and Lemma~\ref{lem2.2} for a $\mathrm{C}^1$-class local diffeomorphism $f$, we can obtain the following theorem on existence of periodic repellers under the assumption that $f$ preserves an expanding ergodic measure. This theorem will be needed in the proof of the statement (2) of Theorem~\ref{thm1}.

\begin{theorem}\label{thm2.3}
Let $f\in\mathrm{Diff}_{\mathrm{loc}}^1(M^n)$ preserve an ergodic probability measure $\mu$. If the minimal Lyapunov exponent of $f$ restricted to $\mu$
\begin{equation*}
\lambda_{\min}(\mu,f)=\lim_{\ell\to+\infty}\frac{1}{\ell}\log\|D_{\!x}f^\ell\|_{\mathrm{co}}>0\qquad\mu\textrm{-a.e. }x\in M^n,
\end{equation*}
then for any $0<\gamma^{\prime\prime}<\lambda_{\min}(\mu,f)$, there exists a sequence of periodic repellers $\{P_\ell\}_1^\infty$ of $f$ with
\begin{equation*}
\lim_{\ell\to+\infty}P_\ell=\mathrm{supp}(\mu)
\end{equation*}
in the sense of the Hausdorff topology, such that $\bigcup_\ell P_\ell$ is nonuniformly expanding for $f^\kappa$ with an expansion indicator $\gamma^{\prime\prime}k$, for some $\kappa\ge1$.
\end{theorem}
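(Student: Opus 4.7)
The plan is to choose $\kappa$ large enough that the Birkhoff average of $\log\|D_xg\|_{\mathrm{co}}$ against a $g$-ergodic component of $\mu$ (with $g := f^\kappa$) comfortably exceeds the Pliss threshold, then apply Lemma~\ref{lem2.2} along a generic orbit to extract quasi-expanding strings, and finally close these strings into periodic orbits of $g$ via Theorem~\ref{thm2.1}, using a pigeonhole trick to bypass any recurrence at a prescribed base point. Concretely, fix $0 < \gamma'' < \gamma' < \gamma < \lambda_{\min}(\mu,f)$; by Kingman, $\frac{1}{\ell}\int\log\|D_xf^\ell\|_{\mathrm{co}}\,d\mu\to\lambda_{\min}(\mu,f)$, so choose $\kappa$ large with $\int\log\|D_xg\|_{\mathrm{co}}\,d\mu \ge \gamma\kappa$. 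Decompose $\mu$ into its $g$-ergodic components, which are cyclically permuted by $f$, and select one $\nu$ with $\int\log\|D_xg\|_{\mathrm{co}}\,d\nu \ge \gamma\kappa$; the $f$-saturation $\bigcup_{j=0}^{\kappa-1}f^j(\mathrm{supp}(\nu))$ recovers $\mathrm{supp}(\mu)$. For a $\nu$-generic $x^*$, Birkhoff gives both $\frac{1}{m}\sum_{i=0}^{m-1}\log\|D_{g^i(x^*)}g\|_{\mathrm{co}}\to\int d\nu \ge \gamma\kappa$ and equidistribution of the $g$-orbit of $x^*$ in $\mathrm{supp}(\nu)$. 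Applying Lemma~\ref{lem2.2} with parameters $\gamma\kappa, \gamma'\kappa$ to the uniformly bounded sequence $a_i=\log\|D_{g^i(x^*)}g\|_{\mathrm{co}}$ yields Pliss times $n_1 < n_2 < \cdots$ of lower density $\ge c_{\gamma\kappa,\gamma'\kappa} > 0$; for each $n_j$ the orbit string $(x^*, g^{n_j}(x^*))$ is $\gamma'\kappa$-quasi-expanding for $g$.

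Given target accuracy $\varepsilon > 0$ with $\gamma'\kappa - \varepsilon \ge \gamma''\kappa$, let $\delta = \delta(\varepsilon, \gamma'\kappa)$ from Theorem~\ref{thm2.1} applied to $g$. Since $\{g^{n_j}(x^*):j \ge 1\}$ is an infinite subset of the compact set $\mathrm{supp}(\nu)$, pigeonhole produces indices $j_1 < j_2$ with $d(g^{n_{j_1}}(x^*),g^{n_{j_2}}(x^*)) < \delta$ and the gap $n_{j_2}-n_{j_1}$ arbitrarily large. The decisive point is that the shorter segment $(g^{n_{j_1}}(x^*), g^{n_{j_2}}(x^*))$ is itself $\gamma'\kappa$-quasi-expanding for $g$: because $n_{j_2}$ is a Pliss time for $x^*$, the backward averages $\frac{1}{s}\sum_{t=1}^{s}\log\|D_{g^{n_{j_2}-t}(x^*)}g\|_{\mathrm{co}}\ge\gamma'\kappa$ hold for every $s\le n_{j_2}$, in particular for every $s\le n_{j_2}-n_{j_1}$. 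The single-string ($k=0$) case of Theorem~\ref{thm2.1} then produces a $g$-periodic point $\bar p_\ell$ of $g$-period $\tau_\ell=n_{j_2}-n_{j_1}$ whose $g$-orbit $\bar P_\ell$ $\varepsilon$-shadows the segment and whose cyclic $g$-average of $\log\|Dg\|_{\mathrm{co}}$ is at least $\gamma'\kappa-\varepsilon\ge\gamma''\kappa$. In particular $\|D_{\bar p_\ell}g^{\tau_\ell}\|_{\mathrm{co}}\ge e^{\tau_\ell\gamma''\kappa}>1$, so every eigenvalue of $D_{\bar p_\ell}f^{\kappa\tau_\ell}=D_{\bar p_\ell}g^{\tau_\ell}$ has modulus $>1$, certifying $\bar p_\ell$ as a periodic repeller of $f$.

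Choosing the gap $n_{j_2}-n_{j_1}$ also large enough that the orbit segment is $\varepsilon$-dense in $\mathrm{supp}(\nu)$ (possible because every forward shift of a $\nu$-generic orbit equidistributes to $\nu$), the shadowing estimate gives $d_H(\bar P_\ell,\mathrm{supp}(\nu))\le 2\varepsilon$. Defining $P_\ell:=\bigcup_{j=0}^{\kappa-1}f^j(\bar P_\ell)$ and using $\mathrm{supp}(\mu)=\bigcup_{j=0}^{\kappa-1}f^j(\mathrm{supp}(\nu))$, one obtains $d_H(P_\ell,\mathrm{supp}(\mu))\le 2L^{\kappa-1}\varepsilon$ where $L$ bounds $\|Df\|$, so $P_\ell \to \mathrm{supp}(\mu)$ in Hausdorff distance as $\varepsilon\to 0$. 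The principal technical obstacle is the classical intersection-of-positive-density-sets difficulty: the Pliss density $c_{\gamma\kappa,\gamma'\kappa}$ may well fail to exceed $1-\nu(B_\delta(x^*))$, so one cannot combine Pliss with Birkhoff recurrence to guarantee a Pliss time at which the orbit of $x^*$ returns into $B_\delta(x^*)$. The plan above sidesteps the issue entirely by closing up a piece running between two Pliss-time positions rather than one returning to $x^*$: compactness of $\mathrm{supp}(\nu)$ supplies the required $\delta$-closeness of endpoints for free, and the Pliss condition at the later time automatically transfers the $\gamma'\kappa$-quasi-expansion to the subsegment.
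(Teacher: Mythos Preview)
Your argument follows the paper's template closely: pass to a power $g=f^\kappa$ so that the one-step average of $\log\|Dg\|_{\mathrm{co}}$ exceeds $\gamma\kappa$, run the Pliss lemma (Lemma~\ref{lem2.2}) along a generic $g$-orbit, exploit that any segment ending at a Pliss time is itself quasi-expanding, pigeonhole two Pliss-time positions within $\delta$ of each other, and close the resulting quasi-expanding string via Theorem~\ref{thm2.1}. The paper does exactly this, with two cosmetic differences: it obtains the base point via the subadditive averaging Lemma~\ref{lem2.4} rather than your ergodic decomposition of $\mu$ under $g$, and it establishes Hausdorff convergence to $\mathrm{supp}(\mu)$ by imposing the ratio constraint $n_k'/(n_k''-n_k')\le\tfrac12$ and showing via a telescoping identity that the empirical measure on the segment converges weak-$*$ to $\mu$, rather than your direct $\varepsilon$-density argument.

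There is, however, one genuine gap in your saturation step. You prove the cyclic $g$-average bound $\frac{1}{\tau_\ell}\sum_{i=0}^{\tau_\ell-1}\log\|D_{g^i(\cdot)}g\|_{\mathrm{co}}\ge\gamma''\kappa$ only on the $g$-orbit $\bar P_\ell$, yet the theorem asks for the nonuniform expansion of $f^\kappa$ on all of $P_\ell=\bigcup_{j=0}^{\kappa-1}f^j(\bar P_\ell)$. For $q=f^j(\bar p_\ell)$ with $0<j<\kappa$, the relevant sum is $\sum_i\log\|D_{f^{i\kappa+j}(\bar p_\ell)}f^\kappa\|_{\mathrm{co}}$, which is not controlled by your bound at the unshifted times $i\kappa$; the conjugacy $D_qg^{\tau_\ell}=(D_{\bar p_\ell}f^j)\,D_{\bar p_\ell}g^{\tau_\ell}\,(D_{\bar p_\ell}f^j)^{-1}$ transfers eigenvalues but not the step-by-step co-norm sum. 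The paper's use of Lemma~\ref{lem2.4} is precisely what allows it to choose a base point that is $f$-generic for $\mu$ (not merely $g$-generic for a component $\nu$), so that the shadowed orbit already approximates $\mathrm{supp}(\mu)$ and no saturation is needed. To repair your route you would need every $g$-ergodic component $\nu_i$ of $\mu$ to satisfy $\int\log\|D_xg\|_{\mathrm{co}}\,d\nu_i\ge\gamma\kappa$, so as to build a $\bar P_\ell^{(i)}$ inside each $\mathrm{supp}(\nu_i)$ and take $P_\ell=\bigcup_i\bar P_\ell^{(i)}$; the averaged inequality for $\mu$ alone does not give this.
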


We notice here that, if $f\in\mathrm{Diff}^1(M^n)$ preserves an ergodic probability measure $\mu$ satisfying
\begin{equation*}
\lambda_{\max}(\mu,f)=\lim_{\ell\to+\infty}\frac{1}{\ell}\log\|D_{\!x}f^\ell\|<0\qquad\mu\textrm{-a.e. }x\in M^n,
\end{equation*}
then Liao proved, using his theory of standard systems of equations in \cite{L73}, that $\mu$ is supported on a periodic attractor of $f$. Our Theorem~\ref{thm2.3} is thus an extension of Liao's result.

To prove this theorem, we need the following subadditive version of
\cite[Theorem~2]{DZ}, which guarantees the existence of a long $\gamma$-string. This long $\gamma$-string enables us to use the Pliss lemma (Lemma~\ref{lem2.2}) and then the shadowing lemma (Theorem~\ref{thm2.1}).

\begin{lemma}\label{lem2.4}
Let $\theta\colon X\rightarrow X$ be a discrete-time
semidynamical system of a compact metrizable space
$X$, which preserves a Borel probability measure $\mu$, and $\{t_\ell\}_{\ell=1}^{+\infty}$ an integer sequence with
\begin{equation*}
t_1\ge 1,\ t_{\ell+1}=2t_\ell \quad (\ell=1,2,\dotsc).
\end{equation*}
Let $\varphi\colon \mathbb{N}\times X\rightarrow \mathbb{R}\cup\{-\infty\}$ be a
measurable function with the subadditivity property
\begin{equation*}
\varphi(k_1+k_2, x)\le\varphi(k_1,x)+\varphi(k_2,\theta^{k_1}(x))\quad \mu\textrm{-a.e. } x\in X
\end{equation*}
for any $k_1,k_2\ge 1$, such that
\begin{description}
\item[(a)] $\varphi(t,\cdot)\in{\mathscr{L}}^1(X,\mu)$ for all $t\ge1$, and

\item[(b)] $\{t^{-1}\varphi(t,\cdot)\}_{t=1}^{+\infty}$ is bounded below by an $\mu$-integrable function.
\end{description}
Then, there exists a Borel subset of $\mu$-measure $1$, written as
$\widehat{\varGamma}$, such that for all $x\in\widehat{\varGamma}$,
\begin{equation*}
\bar{\varphi}(x)=\lim_{t\to+\infty}\frac{1}{t}\varphi(t,x)\qquad
\textrm{with }\bar{\varphi}(\theta^t(x))=\bar{\varphi}(x)\ \forall t>0
\end{equation*}
and
\begin{equation*}
\lim_{\ell\to+\infty}\left\{\lim_{k\to+\infty}\frac{1}{k}\sum_{j=0}^{k-1}\frac{1}{t_\ell}\varphi(t_\ell,\theta^{jt_\ell}(x))\right\}=\bar{\varphi}(x).
\end{equation*}
\end{lemma}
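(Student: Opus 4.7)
The plan is to combine Kingman's subadditive ergodic theorem with Birkhoff's ergodic theorem applied to each iterate $\theta^{t_\ell}$, and to exploit the doubling condition $t_{\ell+1}=2t_\ell$ to produce a monotone decreasing family of Birkhoff averages that squeezes down to $\bar\varphi(x)$. Concretely, first I would apply Kingman to $(\theta,\mu,\varphi)$---assumption~(b) supplies the lower bound $\inf_t t^{-1}\!\int\varphi(t,\cdot)\,d\mu>-\infty$ needed for its hypotheses---obtaining a $\theta$-invariant Borel set $\varGamma_0$ of full $\mu$-measure on which $\bar\varphi(x):=\lim_{t\to\infty} t^{-1}\varphi(t,x)$ exists and is $\theta$-invariant, together with the $\mathscr{L}^1$ convergence $t^{-1}\!\int\varphi(t,\cdot)\,d\mu\to\int\bar\varphi\,d\mu$. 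Then, for every $\ell\geq 1$, the function $f_\ell(x):=t_\ell^{-1}\varphi(t_\ell,x)$ lies in $\mathscr{L}^1(X,\mu)$ by (a), and Birkhoff's theorem applied to $f_\ell$ under $\theta^{t_\ell}$ produces a full-measure Borel set $\varGamma_\ell$ on which
\[
s_\ell(x):=\lim_{k\to\infty}\frac{1}{k}\sum_{j=0}^{k-1} f_\ell\bigl(\theta^{jt_\ell}(x)\bigr)=\mathbb{E}[f_\ell\,|\,\mathcal{I}_{t_\ell}](x)
\]
exists, $\mathcal{I}_{t_\ell}$ denoting the $\sigma$-algebra of $\theta^{t_\ell}$-invariant sets. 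Setting $\widehat\varGamma=\bigcap_{\ell\geq 0}\varGamma_\ell$ gives a full-measure Borel set on which all inner limits in the target identity are defined.

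The core step is to show $s_\ell\downarrow\bar\varphi$ on $\widehat\varGamma$. The doubling rule combined with subadditivity yields $\varphi(t_{\ell+1},x)\leq\varphi(t_\ell,x)+\varphi(t_\ell,\theta^{t_\ell}(x))$, so $f_{\ell+1}(x)\leq\tfrac{1}{2}[f_\ell(x)+f_\ell(\theta^{t_\ell}(x))]$; averaging both sides along $\theta^{jt_{\ell+1}}=\theta^{2jt_\ell}$ over $j=0,\ldots,k-1$ turns the right-hand side into a $(2k)$-term Birkhoff average of $f_\ell$ under $\theta^{t_\ell}$, and letting $k\to\infty$ gives $s_{\ell+1}\leq s_\ell$ on $\widehat\varGamma$. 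For the lower bound, the inequality $\varphi(nt_\ell,x)\leq\sum_{j=0}^{n-1}\varphi(t_\ell,\theta^{jt_\ell}(x))$ implies $(nt_\ell)^{-1}\varphi(nt_\ell,x)\leq n^{-1}\sum_{j=0}^{n-1}f_\ell(\theta^{jt_\ell}(x))$, and passing to $n\to\infty$ via Kingman on the left and Birkhoff on the right shows $\bar\varphi\leq s_\ell$ on $\widehat\varGamma$.

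It remains to show that the decreasing limit $s_\infty:=\lim_\ell s_\ell\geq\bar\varphi$ actually equals $\bar\varphi$. Here I would use $\int s_\ell\,d\mu=\int f_\ell\,d\mu$ (property of conditional expectation) together with the $\mathscr{L}^1$ conclusion $\int f_\ell\,d\mu\to\int\bar\varphi\,d\mu$ from Kingman; since $0\leq s_1-s_\ell\uparrow s_1-s_\infty$ with $s_1\in\mathscr{L}^1(X,\mu)$, monotone convergence gives $\int s_\infty\,d\mu=\int\bar\varphi\,d\mu$, and combined with $s_\infty\geq\bar\varphi$ this forces $s_\infty=\bar\varphi$ $\mu$-a.e. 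The main obstacle I anticipate is spotting the monotonicity $s_{\ell+1}\leq s_\ell$: this is precisely where the doubling hypothesis $t_{\ell+1}=2t_\ell$ is used, and without it the Birkhoff averages along $\theta^{t_\ell}$ and $\theta^{t_{\ell+1}}$ live on incompatible orbit subdivisions so that the straightforward comparison collapses; everything else is a careful combination of subadditivity, Kingman, Birkhoff and monotone convergence.
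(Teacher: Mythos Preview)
Your proposal is correct and takes essentially the same approach as the paper: Kingman gives $\bar\varphi$, Birkhoff applied to $\theta^{t_\ell}$ gives each $s_\ell$, subadditivity yields $s_\ell\geq\bar\varphi$, the doubling hypothesis yields the monotonicity $s_{\ell+1}\leq s_\ell$, and convergence of integrals forces $s_\ell\downarrow\bar\varphi$ a.e. The only cosmetic difference is in the last step: the paper obtains $\int(s_\ell-\bar\varphi)\,d\mu\to 0$ via Fatou's lemma (this is where hypothesis~(b) is invoked) and then passes through an a.e.\ convergent subsequence before using monotonicity to upgrade to the full sequence, whereas you use the Birkhoff identity $\int s_\ell\,d\mu=\int f_\ell\,d\mu$ together with monotone convergence directly.
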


\noindent \textbf{Note.} Here $\bar{\varphi}(x)$ is defined by the Kingman
subadditive ergodic theorem~\cite{King} such that
\begin{equation*}
\int_X\bar{\varphi}(x)\mu(dx)=\lim_{t\to+\infty}\frac{1}{t}\int_X\bar{\varphi}(t,x)\mu(dx)=\inf_{t\ge1}\frac{1}{t}\int_X\bar{\varphi}(t,x)\mu(dx).
\end{equation*}
Since
\begin{equation*}
|\bar{\varphi}(x)|=\lim_{t\to+\infty}\frac{1}{t}|\varphi(t,x)|\le\lim_{t\to+\infty}\frac{1}{t}\sum_{i=0}^{t-1}|\varphi(1,\theta^i(x))|=\psi(x)\qquad\mu\textrm{-a.e. }x\in X,
\end{equation*}
where $\psi\in \mathscr{L}^1(X,\mu)$ is defined by the Birkhoff ergodic theorem for $\theta\colon X\rightarrow X$ and $|\varphi(1,\cdot)|$, hence under our hypothesis we have
$\bar{\varphi}\in\mathscr{L}^1(X,\mu)$.

\begin{proof}
The following argument is parallel to that of \cite[Theorem~2]{DZ}.
According to the Kingman subadditive ergodic theorem, there is a
Borel set $\varGamma^\prime\subset X$ of $\mu$-measure $1$ and a measurable function $\bar{\varphi}\in\mathscr{L}^1(X,\mu)$ such that
\begin{equation*}
\bar{\varphi}(x)=\lim_{t\to+\infty}\frac{1}{t}\varphi(t,x)\quad
\textrm{with }\bar{\varphi}(\theta^t(x))=\bar{\varphi}(x)\qquad
\forall x\in\varGamma^\prime
\end{equation*}
and
\begin{equation*}
\int_X\bar{\varphi}(x)\mu(dx)=\lim_{t\to+\infty}\frac{1}{t}\int_X\varphi(t,x)\mu(dx).
\end{equation*}
So, for the given sequence $\{t_\ell\}_1^\infty$ we have
\begin{equation*}
\lim_{\ell\to+\infty}\int_X\frac{1}{t_\ell}\varphi(t_\ell,\theta^{\alpha}(x))\mu(dx)=\int_X\bar{\varphi}(x)\mu(dx)
\end{equation*}
uniformly for $\alpha\in\mathbb{Z}_+$, since $\mu$ is $\theta^\alpha$-invariant. For any
$\varepsilon>0$, there thus exists an $\ell(\varepsilon)>0$ such that, if
$\ell\ge\ell(\varepsilon)$ then
\begin{equation*}
\frac{1}{k}\sum_{j=0}^{k-1}\int\limits_X\left\{\frac{1}{t_\ell}\varphi(t_\ell,\theta^{jt_\ell}(x))-\bar{\varphi}(x)\right\}\mu(dx)\le\frac{\varepsilon}{2}\qquad\forall k\in\mathbb{N}.
\end{equation*}
This means that for all $\ell\ge\ell(\varepsilon)$ there holds the
inequality
\begin{equation*}
\int\limits_X\frac{1}{k}\sum_{j=0}^{k-1}\left\{\frac{1}{t_\ell}\varphi(t_\ell,\theta^{jt_\ell}(x))-\bar{\varphi}(x)\right\}\mu(dx)\le\frac{\varepsilon}{2}\qquad\forall k\in\mathbb{N}.
\end{equation*}

For any $\ell\ge1$, we now consider the sample
$\theta^{t_\ell}\colon X\rightarrow X$ which also preserves $\mu$, but not necessarily ergodic even if $\mu$ is ergodic under $\theta\colon X\rightarrow X$.
From the Birkhoff ergodic theorem and the subadditivity of
$\varphi(t,x)$, it follows that there is a
$\theta^{t_\ell}$-invariant Borel subset
$W_\ell^*\subset\varGamma^\prime$ of $\mu$-measure $1$ such that
\begin{equation*}
\lim_{k\to+\infty}\frac{1}{k}\sum_{j=0}^{k-1}\left\{\frac{1}{t_\ell}\varphi(t_\ell,\theta^{jt_\ell}(x))-\bar{\varphi}(x)\right\}=h_\ell^*(x)\ge
0\qquad\forall x\in W_\ell^*
\end{equation*}
for some $h_\ell^*(\cdot)\in{\mathscr{L}}^1(X,\mu)$, for all $\ell\ge1$. Set
\begin{equation*}
\widehat{\varGamma}=\bigcap_{\ell=1}^{+\infty} W_\ell^*.
\end{equation*}
Clearly, $\widehat{\varGamma}\subset\varGamma^\prime$ and $\mu(\widehat{\varGamma})=1$.
By \textbf{(b)}, the sequence of functions $\left\{\frac{1}{k}\sum_{j=0}^{k-1}\frac{1}{t_\ell}\varphi(t_\ell,\theta^{jt_\ell}(\cdot))\right\}_{k=1}^{+\infty}$ is bounded below by an $\mu$-integrable function.
Thus from Fatou's lemma, there follows that
\begin{align*}
\int_{\widehat{\varGamma}}h_\ell^*(x)\mu(dx)&\le\liminf_{k\to+\infty}\int_{\widehat{\varGamma}}\frac{1}{k}\sum_{j=0}^{k-1}\left\{\frac{1}{t_\ell}\varphi(t_\ell,\theta^{jt_\ell}(x))-\bar{\varphi}(x)\right\}\mu(dx)\\
&\le\frac{\varepsilon}{2}
\end{align*}
for all $\ell\ge\ell(\varepsilon)$, and hence
\begin{equation*}
\lim_{\ell\to+\infty}\int_{\widehat{\varGamma}}h_\ell^*(x)\mu(dx)=0.
\end{equation*}
So, one can find a subsequence $\left\{h_{\ell_k}^*(\cdot)\right\}_{k=1}^{+\infty}$ such that
\begin{equation*}
h_{\ell_k}^*(x)\to 0\quad \textrm{as }k\to+\infty\qquad \textrm{for }
\mu\textrm{-a.e. }x\in\widehat{\varGamma}.
\end{equation*}
In addition, noting $t_{\ell+1}=2t_\ell$ for any $\ell\ge1$, for all $x\in\widehat{\varGamma}$
\begin{align*}
h_\ell^*(x)&=\lim_{k\to+\infty}\frac{1}{2k}\sum_{j=0}^{2k-1}\left\{\frac{1}{t_\ell}\varphi(t_\ell,\theta^{jt_\ell}(x))-\bar{\varphi}(x)\right\}\\
&\ge\lim_{k\to+\infty}\frac{1}{k}\sum_{j=0}^{k-1}\left\{\frac{1}{t_{\ell+1}}\varphi(t_{\ell+1},\theta^{jt_{\ell+1}}(x))-\bar{\varphi}(x)\right\}\\
&=h_{\ell+1}^*(x).
\end{align*}
This implies that
\begin{equation*}
h_{\ell}^*(x)\to 0\quad \textrm{as }\ell\to+\infty\quad \textrm{for
} \mu\textrm{-a.e. }x\in\widehat{\varGamma},
\end{equation*}
which proves the lemma.
\end{proof}

Now, we can readily prove Theorem~\ref{thm2.3} stated before based on Lemmas~\ref{lem2.4} and \ref{lem2.2} and Theorem~\ref{thm2.1}.

\begin{proof}[Proof of Theorem~\ref{thm2.3}]
Assume that $\mu$ is not supported on a periodic orbit of $f$; otherwise the statement holds trivially.
Let $\gamma,\gamma^\prime$ and $\gamma^{\prime\prime}$ be three constants such that
\begin{equation*}
\lambda_{\min}(\mu,f)>\gamma>\gamma^\prime>\gamma^{\prime\prime}>0,
\end{equation*}
and define the subadditive functions
\begin{equation*}
\varphi(t,x)=-\log\|D_{\!x}f^t\|_{\mathrm{co}}\quad\forall (t,x)\in\mathbb{N}\times M^n.
\end{equation*}
From Lemma~\ref{lem2.4} with $X=M^n$ and $\theta=f$, it follows that there is an $\ell>0$ and a non-periodic point $\by\in\mathrm{supp}(\mu)\cap\widehat{\varGamma}$, where $\widehat{\varGamma}$ is defined by Lemma~\ref{lem2.4}, such that
\begin{equation*}
\lim_{k\to+\infty}\frac{1}{k}\sum_{j=0}^{k-1}\log\|D_{\!{f^{jt_\ell}}(\by)}f^{t_\ell}\|_{\mathrm{co}}>\gamma{t_\ell}
\quad\textrm{and}\quad
\mu=\lim_{J\to+\infty}\frac{1}{J}\sum_{j=0}^{J-1}\delta_{f^j(\by)},
\end{equation*}
where $\delta_y$ denotes the Dirac measure at $y$ and the integer $t_\ell$ is given as in Lemma~\ref{lem2.4}.

From Lemma~\ref{lem2.2} with $a_j=\log\|D_{\!{f^{jt_\ell}}(\by)}f^{t_\ell}\|_{\mathrm{co}}$ for all $j\ge0$, it follows that one can  find a positive integer sequence $\{n_k\}_{k=1}^{+\infty}$ with $n_k\to+\infty$ such that
$(a_0,\ldots,a_{n_k-1})$ is a ``$\gamma^\prime t_\ell$-quasi-expanding
string" for all $k\ge1$, i.e.,
\begin{equation*}
\frac{1}{m}\sum_{j=1}^{m}a_{n_k-j}\geq\gamma^{\prime}t_\ell\qquad
\forall  m=1,\ldots,n_k.
\end{equation*}
For the simplicity of notation, we assume $t_\ell=1$; otherwise we consider $f^{t_\ell}$ instead of $f$ when applying Theorem~\ref{thm2.1}.

Write $x_j=f^j(\by)$ for all $j\ge0$. By the compactness of $M^n$, there can be found two subsequences $\{n_k^\prime\}_{k=1}^{+\infty}$ and $\{n_k^{\prime\prime}\}_{k=1}^{+\infty}$ of $\{n_k\}_{k=1}^{+\infty}$ such that
\begin{equation*}
\tau_k:=n_k^{\prime\prime}-n_k^\prime\to+\infty,\ \textsl{d}(x_{n_k^\prime}, x_{n_k^{\prime\prime}})\to 0\quad \textrm{as }k\to+\infty
\end{equation*}
and
\begin{equation*}
\frac{n_k^\prime}{\tau_k}\le\frac{1}{2}\quad \forall k\ge1.
\end{equation*}
Since
\begin{equation*}
\mu_k:=\frac{1}{\tau_k}\sum_{j=0}^{\tau_k-1}\delta_{x_{n_k^\prime+j}}=\left(1+\frac{n_k^\prime}{\tau_k}\right)\frac{1}{n_k^{\prime\prime}}\sum_{j=0}^{n_k^{\prime\prime}-1}\delta_{x_j}
-\frac{n_k^\prime}{\tau_k}\frac{1}{n_k^{\prime}}\sum_{j=0}^{n_k^{\prime}-1}\delta_{x_j},
\end{equation*}
there is no loss of generality in assuming that $\mu_k$ converges weakly-$*$ to $\mu$ as $k\to+\infty$. In fact, from
\begin{equation*}
\frac{1}{n_k^{\prime\prime}}\sum_{j=0}^{n_k^{\prime\prime}-1}\delta_{x_j}\xrightarrow[]{\textrm{weakly-}*}\mu\quad \textrm{and}\quad \frac{1}{n_k^{\prime}}\sum_{j=0}^{n_k^{\prime}-1}\delta_{x_j}\xrightarrow[]{\textrm{weakly-}*}\mu\quad \textrm{as }k\to+\infty,
\end{equation*}
it follows that
\begin{equation*}
\lim_{k\to+\infty}\int_{M^n}h(x)\mu_k(dx)=\lim_{k\to+\infty}\left(1+\frac{n_k^\prime}{\tau_k}-\frac{n_k^\prime}{\tau_k}\right)\int_{M^n}h(x)\mu(dx)
=\int_{M^n}h(x)\mu(dx)
\end{equation*}
for any $h\in\mathrm{C}^0(M^n)$.
Noting that $(x_{n_k^\prime}, f^{\tau_k}(x_{n_k^\prime}))$ is a $\gamma^\prime$-quasi-expanding orbit-string of $f$ with $\textsl{d}(x_{n_k^\prime}, f^{\tau_k}(x_{n_k^\prime}))$ converging to $0$, and $x_{n_k^{\prime\prime}}=f^{\tau_k}(x_{n_k^\prime})$.

Let $d_H(\cdot,\cdot)$ denote the Hausdorff metric for nonempty compact sets of $M^n$. Then, it follows, from Theorem~\ref{thm2.1} with $\lambda=\gamma^\prime$ and $0<\varepsilon<\gamma^\prime-\gamma^{\prime\prime}$, that there can be found a sequence of periodic repellers $\{P_{k_j}\}$ of $f$ with 
\begin{equation*}
\lim_{j\to+\infty}P_{k_j}=\Delta\subseteq\mathrm{supp}(\mu)\quad \textrm{and}\quad \lim_{j\to+\infty}d_H(P_{k_j}, (x_{n_{k_j}^\prime}, f^{\tau_{k_j}}(x_{n_{k_j}^\prime})))=0, 
\end{equation*}
and $f$ is nonuniformly expanding on $\bigcup_{k_j} P_{k_j}$ with an expansion indicator $\lambda\ge\gamma^{\prime\prime}$. So, we can obtain that $\lim_{j\to+\infty}\mathrm{supp}(\mu_{k_j})=\Delta$.

It is clear that $\Delta=\mathrm{supp}(\mu)$. In fact, if this fails, there is some $\hat{x}\in\mathrm{supp}(\mu)\setminus\Delta$ and further let $\textsl{d}(\hat{x},\Delta)=r>0$. Then there is a continuous function $\xi\colon M^n\rightarrow[0,1]$ satisfying
\begin{equation*}
\xi(\hat{x})=1\quad\textrm{and}\quad\xi(x)=0\ \forall x\in M^n\textrm{ with }\textsl{d}(x,\Delta)\le\frac{r}{2};
\end{equation*}
hence
\begin{equation*}
0<\int_{M^n}\xi(x)\mu(dx)=\lim_{j\to+\infty}\int_{M^n}\xi(x)\mu_{k_j}(dx)=\lim_{j\to+\infty}\int_{\mathrm{supp}(\mu_{k_j})}\xi(x)\mu_{k_j}(dx)=0,
\end{equation*}
a contradiction.

This ends the proof of Theorem~\ref{thm2.3}.
\end{proof}

\section{Natural extension of local diffeomorphisms}\label{sec3}%

For a diffeomorphism $f\colon M^n\rightarrow M^n$, Ma\~{n}\'{e}'s
arguments in~\cite{Man87} relies on the concept ``$(t,\gamma)$-set".
Let $K$ be an $f$-invariant compact subset of $M^n$ and
$t\ge 1, \gamma>0$. We say $K$ to be a $(t,\gamma)$-set of
$f$ if for every $z\in K$ there exists an integer ${\mathfrak
m}_{z}\in[0,t)$ such that
\begin{equation*}
\frac{1}{\ell}\sum_{i=1}^{\ell}\log\|D_{\!f^{-i}(f^{{\mathfrak
m}_{z}}(z))}f\|_{\mathrm{co}}\ge\gamma\qquad\forall \ell\ge1.
\end{equation*}
Now, since $f$ is not invertible, $f^{-i}$ makes no sense here. To
control the non-invertibility of $f$ in the context of
Theorem~\ref{thm1}, we need to introduce the natural extension of
$f$.

\subsection{The extension of a cocycle}\label{sec3.1}%

Let $f\colon X\rightarrow X$ be an arbitrarily given continuous
endomorphism of a compact metric space $(X,\textsl{d})$. Let
\begin{equation*}
\varSigma_{\!f}=\left\{\bar{x}=(\ldots, x_i,\ldots, x_{-1},x_0)\in
X^{\mathbb{Z}_-}\,|\,f(x_{i-1})=x_i\
\forall i\in\mathbb{Z}_-\right\},
\end{equation*}
where $\mathbb{Z}_-=\{0,-1,-2,\ldots\}$. Then,
\begin{equation*}
\textsl{d}_{\!f}(\bar{x},\bar{y})=\sum_{i=0}^{-\infty}2^i\min\{1,\textsl{d}(x_i,y_i)\}\qquad\forall \bar{x},\bar{y}\in\varSigma_{\!f}
\end{equation*}
is a metric on $\varSigma_{\!f}$ under which the shift mapping
\begin{equation*}
\sigma_{\!f}\colon\varSigma_{\!f}\rightarrow\varSigma_{\!f};\quad(\ldots,
x_i,\ldots, x_{-1},x_0)\mapsto(\ldots, x_i,\ldots,
x_{-1},x_0,f(x_0))
\end{equation*}
is a topological dynamical system (homeomorphism) on the compact metric space
$(\varSigma_{\!f},\textsl{d}_{\!f})$. Let
\begin{equation*}
\pi\colon\varSigma_{\!f}\rightarrow X;\quad (\ldots, x_i,\ldots,
x_{-1},x_0)\mapsto x_0
\end{equation*}
be the natural projection.

If $f$ is a $\mathrm{C}^1$-class local diffeomorphism of the closed manifold
$X=M^n$, we further set
\begin{equation*}
T_{\!\bar{x}}\varSigma_{\!f}=T_{\!\pi(\bar{x})}M^n\quad
\textrm{and}\quad F_{\bar{x}}\colon
T_{\!\bar{x}}\varSigma_{\!f}\rightarrow
T_{\!\sigma_{\!f}(\bar{x})}\varSigma_{\!f};\
v\mapsto(D_{\!\pi(\bar{x})}f)v\qquad\forall \bar{x}\in\varSigma_{\!f}.
\end{equation*}
Then, we obtain the natural linear skew-product dynamical system
\begin{equation*}
\begin{CD}
T\varSigma_{\!f}&@>{F}>>&T\varSigma_{\!f}\\
@V{\Pr}VV&{}&@VV{\Pr}V\\
\varSigma_{\!f}&@>{\sigma_{\!f}}>>&\varSigma_{\!f}
\end{CD}\quad \textrm{ where }
F(\bar{x},v)=(\sigma_{\!f}(\bar{x}), F_{\bar{x}}(v))\textrm{ and }
\Pr\colon(\bar{x},v)\mapsto\bar{x}.
\end{equation*}
Note here that the inverse of $\sigma_{\!f}$ is defined as
\begin{equation*}
\sigma_{\!f}^{-1}\colon\varSigma_{\!f}\rightarrow\varSigma_{\!f};\quad(\ldots,
x_i,\ldots, x_{-1},x_0)\mapsto(\ldots, x_i,\ldots, x_{-1}).
\end{equation*}

For any forward invariant set $\Lambda$ of $f$, let
$\Lambda_{\!f}=\pi^{-1}(\Lambda)$, which is called the ``extension'' of
$\Lambda$ under $f$. It is easy to see that $\Lambda_{\!f}$ is also a
forward $\sigma_{\!f}$-invariant set, i.e.,
$\sigma_{\!f}(\Lambda_{\!f})\subseteq\Lambda_{\!f}$.

The closure of a subset $Y$ in a topological space $Z$ is denoted by $\mathrm{Cl}_{Z}(Y)$. By the continuity of $\pi$ and the definition of $\textsl{d}_f(\cdot,\cdot)$, we can obtain that
\begin{equation*}
\pi(\mathrm{Cl}_{\varSigma_{\!f}}(\Lambda_{\!f}))\subseteq\mathrm{Cl}_{X}(\Lambda)\quad \textrm{and}\quad\pi(\Lambda_{\!f})=\Lambda
\end{equation*}
for any subset $\Lambda\subseteq X$.

We will need the following lemma.

\begin{lemma}\label{lem3.1}
Let $f\colon X\rightarrow X$ be a continuous endomorphism of a
compact metric space $X$. Then, for any forward $f$-invariant set
$\Lambda\subseteq X$, there follows
\begin{equation*}
\pi(\mathrm{Cl}_{\varSigma_{\!f}}(\Lambda_{\!f}))=\mathrm{Cl}_{X}(\Lambda),
\end{equation*}
where
$\pi\colon\varSigma_{\!f}\rightarrow X$ is the natural projection.
\end{lemma}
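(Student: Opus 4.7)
The plan is to prove the two inclusions separately. One direction, namely $\pi(\mathrm{Cl}_{\varSigma_{\!f}}(\Lambda_{\!f}))\subseteq\mathrm{Cl}_{X}(\Lambda)$, is already recorded in the paragraph preceding the lemma and follows immediately from the continuity of $\pi$ together with $\pi(\Lambda_{\!f})=\Lambda$: any $\bar{x}\in\mathrm{Cl}_{\varSigma_{\!f}}(\Lambda_{\!f})$ is the $\textsl{d}_{\!f}$-limit of some sequence $\bar{x}_k\in\Lambda_{\!f}$, so $\pi(\bar{x})=\lim\pi(\bar{x}_k)\in\mathrm{Cl}_{X}(\pi(\Lambda_{\!f}))=\mathrm{Cl}_{X}(\Lambda)$. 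The content of the lemma is therefore the reverse inclusion $\mathrm{Cl}_{X}(\Lambda)\subseteq\pi(\mathrm{Cl}_{\varSigma_{\!f}}(\Lambda_{\!f}))$.

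To establish this, I would use a compactness-lifting argument. First I would note that $\varSigma_{\!f}$, as defined, is a closed subset of the compact product space $X^{\mathbb{Z}_-}$ (it is cut out by the continuous constraints $f(x_{i-1})=x_i$), and that $\textsl{d}_{\!f}$ metrizes the relative product topology; hence $(\varSigma_{\!f},\textsl{d}_{\!f})$ is compact. Next, given any $y\in\mathrm{Cl}_{X}(\Lambda)$, I would select a sequence $y_k\in\Lambda$ with $y_k\to y$ in $X$. Using the identity $\pi(\Lambda_{\!f})=\Lambda$, for each $k$ I would pick a preimage $\bar{y}_k\in\Lambda_{\!f}$ with $\pi(\bar{y}_k)=y_k$; by compactness of $\varSigma_{\!f}$, after extracting a subsequence one has $\bar{y}_k\to\bar{y}$ for some $\bar{y}\in\varSigma_{\!f}$. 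Since every $\bar{y}_k$ lies in $\Lambda_{\!f}$, the limit $\bar{y}$ belongs to $\mathrm{Cl}_{\varSigma_{\!f}}(\Lambda_{\!f})$, and continuity of $\pi$ gives $\pi(\bar{y})=\lim_k\pi(\bar{y}_k)=\lim_k y_k=y$, as required.

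The only conceptual point to watch is the lifting step ``pick $\bar{y}_k\in\Lambda_{\!f}$ with $\pi(\bar{y}_k)=y_k$''. This reduces to the surjectivity of $\pi\colon\varSigma_{\!f}\to X$ restricted to $\Lambda$, which is precisely what $\pi(\Lambda_{\!f})=\Lambda$ asserts; concretely, each $y_k\in\Lambda\subseteq\pi(\varSigma_{\!f})$ admits at least one complete backward orbit, a fact that in the paper's setting is guaranteed by the surjectivity of the local diffeomorphism $f$ (and which the authors have already built into the identity $\pi(\Lambda_{\!f})=\Lambda$). No further arguments involving the $f$-invariance of $\Lambda$ are needed; forward invariance is used only to ensure that $\Lambda_{\!f}$ is $\sigma_{\!f}$-forward invariant (a fact not directly required here). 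The compactness of $\varSigma_{\!f}$ and continuity of $\pi$ do all the remaining work, and this I regard as the main, but routine, technical step.
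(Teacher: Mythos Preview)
Your proof is correct and follows essentially the same approach as the paper: both establish the nontrivial inclusion by lifting a sequence $y_k\in\Lambda$ converging to the target point to preimages $\bar{y}_k\in\Lambda_{\!f}$ and then passing to a limit point in $\mathrm{Cl}_{\varSigma_{\!f}}(\Lambda_{\!f})$. The only cosmetic difference is that the paper carries out the coordinate-by-coordinate diagonal extraction by hand, whereas you invoke the compactness of $(\varSigma_{\!f},\textsl{d}_{\!f})$ as a black box to extract a convergent subsequence in one stroke; these are equivalent, and your packaging is arguably cleaner.
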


\begin{proof}
Let $\Lambda$ be a forward $f$-invariant subset of $X$. The statement trivially holds when $\Lambda=\varnothing$ or $X$. So, we now assume $\Lambda\not=\varnothing$ and $\not=X$. Let
$x\in\mathrm{Cl}_{X}(\Lambda)\setminus\Lambda$ be arbitrarily given. Then, there
is a sequence of points $p_j^{}$ in $\Lambda$ such that
\begin{equation*}
p_j^{}\to x\qquad\textrm{as }j\to+\infty.
\end{equation*}
For all $j=1,2,\ldots$, we arbitrarily pick
\begin{equation*}
\bar{p}_j^{}=(\ldots, p_{j,i}^{},\ldots, p_{j,-1}^{},p_{j,0}^{})\in\pi^{-1}(p_j^{})\subset\Lambda_{\!f}.
\end{equation*}
We now will define an $\bar{x}=(\ldots, x_{-1},
x_0)=(x_i)_{i=0}^{-\infty}\in\mathrm{Cl}_{\varSigma_{\!f}}(\Lambda_{\!f})$ with $\pi(\bar{x})=x$ by induction
on $i$ as follows:

First, let us choose $x_0=x$. Obviously, $p_{j,0}^{}\to x_0$ as $j\to+\infty$.

Secondly, since $X$ is compact, we can pick a convergent subsequence
$\left\{p_{j_k^{(1)},-1}^{}\right\}_{k=1}^{+\infty}$ from
$\left\{p_{j,-1}^{}\right\}_{j=1}^{+\infty}$. Let
$x_{-1}=\lim\limits_{k\to+\infty}p_{j_k^{(1)},-1}^{}$. It is easy to
see that $f(x_{-1})=x_0$ by the continuity of $f$. Assume $x_i, i\le -2$,
$\left\{j_k^{(-i)}\right\}_{k=1}^{\infty}$ have been defined such
that
\begin{equation*}
f(x_i)=x_{i+1}\quad \textrm{and}\quad p_{j_k^{(-i)},i}\to x_i\quad \textrm{as }
k\to+\infty.
\end{equation*}
By the compactness of the space $X$ once again, we can pick a convergent
subsequence, say $\left\{p_{j_k^{(-i+1)},i-1}^{}\right\}_{k=1}^{+\infty}$,
from $\left\{p_{j,i-1}^{}\right\}_{j=1}^{+\infty}$ such that
$\left\{j_k^{(-i+1)}\right\}_{k=1}^\infty\subset\left\{j_k^{(-i)}\right\}_{k=1}^\infty$.
We let $x_{i-1}=\lim\limits_{k\to\infty}p_{j_k^{(-i+1)},i-1}^{}$. Then from
the construction, we easily get $f(x_{i-1})=x_i$. This completes the
induction step.

Thus, we have chosen a point $\bar{x}=(\ldots, x_i,\ldots,
x_1,x_0)\in\varSigma_{\!f}$ such that $\pi(\bar{x})=x$ and
$\bar{x}\in\mathrm{Cl}_{\varSigma_{\!f}}(\Lambda_{\!f})$. This implies that $\pi(\mathrm{Cl}_{\varSigma_{\!f}}(\Lambda_{\!f}))\supseteq\mathrm{Cl}_{X}(\Lambda)$.

This shows Lemma~\ref{lem3.1}.
\end{proof}

We note that generally $\mathrm{Cl}_{X}(\Lambda)_{\!f}$ is
bigger than $\mathrm{Cl}_{\varSigma_{\!f}}(\Lambda_{\!f})$ when $f$ is not injective, for an arbitrary $f$-invariant set $\Lambda\subset X$.
\subsection{Obstruction and $(t,\gamma)$-set}\label{sec3.2}%

Hereafter, let $f\colon M^n\rightarrow M^n$ be an arbitrarily given
$\mathrm{C}^1$-class local diffeomorphism of the closed manifold $M^n$. Let
$\sigma_{\!f}\colon\varSigma_{\!f}\rightarrow\varSigma_{\!f}$ and $F\colon
T\varSigma_{\!f}\rightarrow T\varSigma_{\!f}$ be the natural
extensions of $f$ defined as in Section~\ref{sec3.1}.

\begin{defn}\label{def3.2}
For $\bar{x}\in\varSigma_f$ and $m\ge1$, $(\bar{x},\sigma_{\!f}^m(\bar{x}))$ is called a {\it $\rho$-string}
of $F$ if
\begin{equation*}
\frac{1}{m}\sum_{i=0}^{m-1}\log\|F_{\sigma_{\!f}^i(\bar{x})}\|_{\mathrm{co}}\ge\rho.
\end{equation*}
Given any $\mathfrak n\ge 1$ and $\varrho>0$, we say
$(\bar{x},\sigma_{\!f}^m(\bar{x}))$ is an {\it $(\mathfrak
n,\varrho)$-obstruction} of $F$ if $m\ge\mathfrak n$ and
$(\bar{x},\sigma_{\!f}^\ell(\bar{x}))$ is not a
$\varrho$-string of $F$ for all $\mathfrak n\le \ell\le m$,
i.e.,
\begin{equation*}
\frac{1}{\ell}\sum_{i=0}^{\ell-1}\log\|F_{\!\sigma_{\!f}^i(\bar{x})}\|_{\mathrm{co}}<\varrho\qquad\forall \ell\in[\mathfrak
n,m].
\end{equation*}
Note here that
$F_{\!\sigma_{\!f}^i(\bar{x})}=D_{\!f^i(\pi(\bar{x}))}f$ for any
$i\ge 0$ from the definition of $F$ in Section~\ref{sec3.1}.
\end{defn}

It is easily seen that if $F$ is not expanding on a forward
$\sigma_{\!f}$-invariant closed set $\Theta\subset\varSigma_{\!f}$, then to any
$\mathfrak n\ge 1$ and $\varrho>0$, from the compactness of
$\varSigma_{\!f}$ there can be found at least one $\bar{x}\in\Theta$ such that
$(\bar{x},\sigma_{\!f}^m(\bar{x}))$ is an $(\mathfrak
n,\varrho)$-obstruction of $F$ for all $m\ge\mathfrak n$.

\begin{lemma}[{\cite[Lemma~II-4]{Man87}}]\label{lem3.3}
Let $\bar{\gamma}_2^{}>\gamma_3^{}>0$ and
$(\bar{x},\sigma_{\!f}^m(\bar{x}))$ be a $\bar{\gamma}_2^{}$-string
of $F$, i.e.,
\begin{equation*}
\frac{1}{m}\sum_{i=0}^{m-1}\log\|F_{\sigma_{\!f}^i(\bar{x})}\|_{\mathrm{co}}\ge\bar{\gamma}_2.
\end{equation*}
Let $0<n_1<\cdots<n_k\le m$ be the set of integers such that
$(\bar{x},\sigma_{\!f}^{n_i}(\bar{x}))$ is a
$\gamma_3^{}$-quasi-expanding string of $F$, i.e.,
\begin{equation*}
\frac{1}{\ell}\sum_{j=1}^{\ell}\log\|F_{\sigma_{\!f}^{n_i-j}(\bar{x})}\|_{\mathrm{co}}\ge{\gamma}_3\quad\forall \ell=1,\ldots,n_i\textrm{ for }1\le i\le k.
\end{equation*}
Then, for all $1\le
i<k$, either $n_{i+1}-n_i\le N_{\bar{\gamma}_2^{},\gamma_3^{}}$ or
$(\sigma_{\!f}^{n_i}(\bar{x}), \sigma_{\!f}^{n_{i+1}}(\bar{x}))$ is
an
$(N_{\bar{\gamma}_2^{},\gamma_3^{}},\bar{\gamma}_2^{})$-obstruction
of $F$. Here $N_{\bar{\gamma}_2^{},\gamma_3^{}}$ is defined in the
manner as in Lemma~\ref{lem2.2} with constants $\gamma=\bar{\gamma}_2$, $\gamma^\prime=\gamma_3$ and
$H=\max\left\{|\log\|D_{\!x}f\|_{\mathrm{co}}|;\ x\in
M^n\right\}$.
\end{lemma}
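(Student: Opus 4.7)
The plan is a Pliss-then-glue contradiction. Fix $1\le i<k$ and abbreviate $a_j:=\log\|F_{\sigma_{\!f}^j(\bar{x})}\|_{\mathrm{co}}$. I may assume $n_{i+1}-n_i>N:=N_{\bar{\gamma}_2,\gamma_3}$, for otherwise the first alternative of the conclusion holds. Suppose, towards a contradiction, that the second alternative fails too, so there exists some $\ell\in[N,\,n_{i+1}-n_i]$ with
\begin{equation*}
\frac{1}{\ell}\sum_{j=0}^{\ell-1}a_{n_i+j}\ge\bar{\gamma}_2.
\end{equation*}

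I next feed this $\bar{\gamma}_2$-string $(a_{n_i},\ldots,a_{n_i+\ell-1})$ of length $\ell\ge N$ into the Pliss lemma (Lemma~\ref{lem2.2}) with parameters $\gamma=\bar{\gamma}_2$ and $\gamma^\prime=\gamma_3$. This produces an integer $m\in(0,\ell]$ such that the backward partial averages from the endpoint $n_i+m$ satisfy $\frac{1}{J}\sum_{j=1}^{J}a_{n_i+m-j}\ge\gamma_3$ for all $J=1,\ldots,m$. The key gluing step is to upgrade this to the statement that $(\bar{x},\sigma_{\!f}^{n_i+m}(\bar{x}))$ is itself a $\gamma_3$-quasi-expanding string of $F$. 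For $J\le m$ this is exactly the Pliss output; for $m<J\le n_i+m$ I split
\begin{equation*}
\sum_{j=1}^{J}a_{n_i+m-j}=\sum_{j=1}^{m}a_{n_i+m-j}+\sum_{j^\prime=1}^{J-m}a_{n_i-j^\prime}\ge m\gamma_3+(J-m)\gamma_3=J\gamma_3,
\end{equation*}
using Pliss for the first block and the hypothesis that $(\bar{x},\sigma_{\!f}^{n_i}(\bar{x}))$ is $\gamma_3$-quasi-expanding for the second (note $J-m\le n_i$).

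Because the list $n_1<\cdots<n_k$ was stipulated to be the \emph{complete} collection of $\gamma_3$-quasi-expanding endpoints in $[1,m]$, the newly produced endpoint $n_i+m\in(n_i,n_{i+1}]$ must coincide with some $n_j$ and hence with $n_{i+1}$, forcing $m=n_{i+1}-n_i=\ell$. The delicate point — and the main obstacle — is to upgrade this to a \emph{strict} inequality $n_i+m<n_{i+1}$ so as to contradict maximality. For this I invoke the quantitative clause of Pliss: the number of admissible indices returned is at least $\max\{1,\ell c_{\bar{\gamma}_2,\gamma_3}\}$. After absorbing into the choice of $N$ the requirement $Nc_{\bar{\gamma}_2,\gamma_3}\ge 2$, Pliss actually delivers two indices $m_1<m_2\le\ell$, so that $m_1<\ell$; the endpoint $n_i+m_1$ then lies strictly inside $(n_i,n_{i+1})$, and the gluing step above promotes it to a $\gamma_3$-quasi-expanding endpoint missing from the supposedly complete list $\{n_1,\ldots,n_k\}$. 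This contradiction closes the proof.
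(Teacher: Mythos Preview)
The paper does not supply its own proof of this lemma; it is quoted verbatim from Ma\~n\'e \cite[Lemma~II-4]{Man87}. Your Pliss-then-glue argument is exactly the intended one, and the gluing step is carried out correctly: the backward averages from $n_i+m$ split cleanly into the Pliss block and the tail governed by the quasi-expanding property of $n_i$.

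Your identification of the boundary case $m=\ell=n_{i+1}-n_i$ as the only obstacle is accurate, and your resolution via the density clause of Lemma~\ref{lem2.2} is legitimate. One remark on the phrase ``absorbing into the choice of $N$'': this is allowed because $N_{\bar\gamma_2,\gamma_3}$ in Lemma~\ref{lem2.2} is only asserted to exist, and the Pliss conclusion persists under enlarging $N$; moreover, increasing $N$ only relaxes both alternatives in the present lemma (more gaps satisfy $n_{i+1}-n_i\le N$, and the obstruction condition is checked on a smaller range of $\ell$). So redefining $N_{\bar\gamma_2,\gamma_3}$ once and for all so that $N_{\bar\gamma_2,\gamma_3}\,c_{\bar\gamma_2,\gamma_3}>1$ is harmless and makes the argument go through. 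In fact the contrapositive of your gluing step already shows that no $j\in(0,n_{i+1}-n_i)$ can be a Pliss time for the block (else $n_i+j$ would be an extra quasi-expanding endpoint), so the density bound $\ell c_{\bar\gamma_2,\gamma_3}\le 1$ is forced, directly contradicting $\ell>N_{\bar\gamma_2,\gamma_3}$.
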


The following result is a special case of \cite[Lemma~II-5]{Man87}
in the case of $r=0$.

\begin{lemma}[{\cite[Lemma~II-5]{Man87}}]\label{lem3.4}
Let there be any given real numbers
\begin{equation*}
\gamma_0^{}>\gamma_1^{}>\gamma_2^{}>\gamma_3^{}>0
\end{equation*}
and integers
\begin{equation*}
m>\ell>\mathfrak n>0.
\end{equation*}
Let $(\bar{x},\sigma_{\!f}^m(\bar{x}))$ be a
$\gamma_0^{}$-string of $F$ and
$(\bar{x},\sigma_{\!f}^\ell(\bar{x}))$ an $(\mathfrak
n,\gamma_2^{})$-obstruction of $F$. Assume
\begin{description}
\item[(a)] $m\ge N_{\gamma_0^{},\gamma_3^{}}$,

\item[(b)] $mc_{\gamma_0^{},\gamma_3^{}}>\ell$,

\item[(c)] $\ell\ge N_{\gamma_1^{},\gamma_2^{}}$ and

\item[(d)] $\ell c_{\gamma_1^{},\gamma_2^{}}>\mathfrak n$.
\end{description}
Then, there exists a $\gamma_3^{}$-quasi-expanding string
$(\bar{x},\sigma_{\!f}^k(\bar{x}))$ of $F$ with $\ell\le k<m$, such
that $(\bar{x},\sigma_{\!f}^k(\bar{x}))$ is not a
$\gamma_1^{}$-string of $F$. Here all $N_{\gamma_0^{},\gamma_3^{}},
c_{\gamma_0^{},\gamma_3^{}}$ and $N_{\gamma_1^{},\gamma_2^{}},
c_{\gamma_1^{},\gamma_2^{}}$ are defined in the manner as in
Lemma~\ref{lem2.2} with $H=\max\left\{|\log\|D_{\!x}f\|_{\mathrm{co}}|;\ x\in
M^n\right\}$.
\end{lemma}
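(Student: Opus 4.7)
The plan is to apply the Pliss lemma (Lemma~\ref{lem2.2}) twice: first with the pair $(\gamma_0,\gamma_3)$ in order to extract $\gamma_3$-quasi-expanding substrings from the given $\gamma_0$-string $(\bar{x},\sigma_{\!f}^m(\bar{x}))$, and then with the pair $(\gamma_1,\gamma_2)$ in order to force a $\gamma_2$-quasi-expanding substring whose length must clash either with the $(\mathfrak n,\gamma_2)$-obstruction or with a carefully chosen minimality condition.

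Concretely, setting $a_i=\log\|F_{\sigma_{\!f}^i(\bar{x})}\|_{\mathrm{co}}$ and $H=\max\{|\log\|D_{\!x}f\|_{\mathrm{co}}|:x\in M^n\}$, I first invoke Lemma~\ref{lem2.2} with $\gamma=\gamma_0,\gamma^\prime=\gamma_3$ on the $\gamma_0$-string of length $m$. Hypothesis~(a) supplies $m\ge N_{\gamma_0,\gamma_3}$, so Pliss yields indices $0<n_1<\cdots<n_s\le m$ with $s\ge m c_{\gamma_0,\gamma_3}$, each $(\bar{x},\sigma_{\!f}^{n_i}(\bar{x}))$ being a $\gamma_3$-quasi-expanding string. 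By~(b), $s>\ell$, and since $n_i\ge i$ and $n_s\le m$, the penultimate index $n_{s-1}$ satisfies $n_{s-1}\ge s-1\ge\ell$ and $n_{s-1}\le m-1$, so it lies in $[\ell,m)$. Let $k_0$ be the smallest integer in $[\ell,m)$ for which $(\bar{x},\sigma_{\!f}^{k_0}(\bar{x}))$ is a $\gamma_3$-quasi-expanding string of $F$; the previous observation shows such a $k_0$ exists.

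Suppose, for contradiction, that $(\bar{x},\sigma_{\!f}^{k_0}(\bar{x}))$ is a $\gamma_1$-string. Then I apply Lemma~\ref{lem2.2} once more, with $\gamma=\gamma_1,\gamma^\prime=\gamma_2$, to this $\gamma_1$-string of length $k_0$. By~(c), $k_0\ge\ell\ge N_{\gamma_1,\gamma_2}$, so Pliss produces $0<n_1^\prime<\cdots<n_t^\prime\le k_0$ with $t\ge k_0 c_{\gamma_1,\gamma_2}\ge\ell c_{\gamma_1,\gamma_2}>\mathfrak n$ via~(d), each $(\bar{x},\sigma_{\!f}^{n_j^\prime}(\bar{x}))$ a $\gamma_2$-quasi-expanding string. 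Being $\gamma_2$-quasi-expanding forces, in particular, being a $\gamma_2$-string (take $J=n_j^\prime$ in the defining inequality) and, since $\gamma_2>\gamma_3$, being a $\gamma_3$-quasi-expanding string as well.

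The contradiction is then extracted by locating the $n_j^\prime$'s. None can lie in $[\mathfrak n,\ell]$, since the corresponding $\gamma_2$-string would violate the $(\mathfrak n,\gamma_2)$-obstruction property of $(\bar{x},\sigma_{\!f}^\ell(\bar{x}))$. None can lie in $[\ell+1,k_0-1]$ either, for such an index would be a $\gamma_3$-quasi-expanding index in $[\ell,m)$ strictly less than $k_0$, contradicting the minimality of $k_0$. Hence every $n_j^\prime$ belongs to $[1,\mathfrak n-1]\cup\{k_0\}$, a set of cardinality at most $\mathfrak n$, which is incompatible with the existence of $t>\mathfrak n$ distinct such indices. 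This contradiction shows that $(\bar{x},\sigma_{\!f}^{k_0}(\bar{x}))$ is not a $\gamma_1$-string, and the lemma follows with $k=k_0$. I expect the main subtlety of the writeup to be the double bookkeeping of Pliss constants and the precise set-up of $k_0$ as the minimum over \emph{all} $\gamma_3$-quasi-expanding indices in $[\ell,m)$, rather than only those produced by the first Pliss step, so that any $\gamma_2$-quasi-expanding (hence $\gamma_3$-quasi-expanding) index delivered by the second Pliss step is genuinely comparable with $k_0$.
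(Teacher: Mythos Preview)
The paper does not supply a proof of this lemma; it is quoted as the special case $r=0$ of Ma\~{n}\'{e}~\cite[Lemma~II-5]{Man87} without argument. Your proof is correct and is precisely the expected double-Pliss argument: the careful choice of $k_0$ as the minimum over \emph{all} $\gamma_3$-quasi-expanding indices in $[\ell,m)$ (not merely those produced by the first Pliss application) is exactly what makes the second-step indices $n_j'$ comparable with $k_0$, and your pigeonhole count---forcing every $n_j'$ into $[1,\mathfrak n-1]\cup\{k_0\}$ while $t>\mathfrak n$---is clean and complete.
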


Let $\Theta\subset\varSigma_{\!f}$ be a forward invariant non-void
closed set of $\sigma_{\!f}$ and $\bar{x}\in\Theta$. Following \cite{Man87}, we define the ``germ" as follows:
\begin{equation*}
\mathrm{J}(\bar{x},\Theta)=\left\{\bar{y}\in\Theta\,|\,\exists\{x_k\}\textrm{ and }\{m_k\}\textrm{ such that }\bar{y}=\lim_{k\to+\infty}\sigma_{\!f}^{m_k}(\bar{x}_k)\right\}
\end{equation*}
where $\{\bar{x}_k\}$ is a sequence in $\Theta$ converging
to $\bar{x}$ and $m_k\to+\infty$.

Clearly to obtain $\mathrm{J}(\bar{x},\Theta)$, it is sufficient to
use sequences $\{\bar{x}_k\}$ contained in a dense subset $\Theta_0$
of $\Theta$. Since $m_k$ converges to $+\infty$ as $k\to+\infty$, it is
easily seen that $\mathrm{J}(\bar{x},\Theta)$ is closed and
$\sigma_{\!f}$-invariant. Moreover, if
$\Theta=\Omega(\sigma_{\!f}|\Theta)$ the nonwandering point set of the restriction of $\sigma_{\!f}$ to $\Theta$, then
$\bar{x}$ itself belongs to $\mathrm{J}(\bar{x},\Theta)$. In addition,
$\mathrm{J}(\bar{x},\Theta)$ is nonempty because every
$\omega$-limit point of $\bar{x}$ belongs to it.

The following notion is a modification of Ma\~{n}\'{e}'s
``$(t,\gamma)$-set'' defined there for a diffeomorphism~\cite[pp.
177]{Man87}.

\begin{defn}\label{def3.5}
A forward $\sigma_{\!f}$-invariant compact subset $\varSigma^\prime$ of
$\varSigma_{\!f}$ is called a {\it $(t,\gamma)$-set} of $F$ where
$t\in\mathbb{N}$ and $\gamma>0$, provided that for every
$\bar{z}\in\varSigma^\prime$, there exists an integer ${\mathfrak
m}_{\bar{z}}\in[0,t)$ such that $\left(\sigma_{\!f}^{\mathfrak
m_{\bar{z}}-r}(\bar{z}), \sigma_{\!f}^r\left(\sigma_{\!f}^{\mathfrak
m_{\bar{z}}-r}(\bar{z})\right)\right)$ is a $\gamma$-string of $F$
for all $r>0$.
\end{defn}

Clearly, this implies that $F$ is expanding on $\varSigma^\prime$. Note
here that we do not require $\sigma_{\!f}^{\mathfrak
m_{\bar{z}}-r}(\bar{z})$ to be in $\varSigma^\prime$, since $\varSigma^\prime$ is only forwardly invariant.

The following lemma will be needed in the proof of our
Theorem~\ref{thm1}.

\begin{lemma}\label{lem3.6}
Let there be given a forward $\sigma_{\!f}$-invariant compact set
$\Theta\subset\varSigma_{\!f}$, on which $F$ is not expanding.
Assume there is the number $c>0$ for which there holds the
inequality
\begin{equation*}
\limsup_{k\to+\infty}\frac{1}{k}\sum_{i=0}^{k-1}\log\|F_{\!\sigma_{\!f}^i(\bar{x})}\|_{\mathrm{co}}\ge
c
\end{equation*}
for a dense set $\Theta_{0}$ of points $\bar{x}\in\Theta$. Then, for
any $\epsilon>0$ and
$c>\gamma_2^{}>\bar{\gamma}_2^{}>\gamma_3^{}>0$, there exists an
integer $N=N_{\epsilon;
\gamma_2^{},\bar{\gamma}_2^{},\gamma_3^{}}\ge 1$ such that for any
$\bar{x}\in\Theta$,
\begin{description}
\item[(1)] either $\mathrm{J}(\bar{x},\Theta)$ is an
$(N,\gamma_3^{})$-set of $F$;

\item[(2)] or there exists $\bar{y}\in\Theta$ such
that $(\bar{y},\sigma_{\!f}^m(\bar{y}))$ is an
$(N_{\bar{\gamma}_2^{},\gamma_3^{}},\gamma_2^{})$-obstruction of $F$
for all $m\ge N_{\bar{\gamma}_2^{},\gamma_3^{}}$, where
$N_{\bar{\gamma}_2^{},\gamma_3^{}}$ is given by Lemma~\ref{lem2.2} with $H=\max_{x\in
M^n}|\log\|D_{\!x}f\|_{\mathrm{co}}|$;
moreover, such $\bar{y}$ satisfies at least one of the following
properties:
\begin{description}
\item[a)] $\textsl{d}_{\!f}(\bar{x},\bar{y})\le\epsilon$;

\item[b)] there exists $\bar{u}\in\Theta$ arbitrarily near to $\bar{x}$ and
$m\ge 1$ such that
$\textsl{d}_{\!f}(\sigma_{\!f}^m(\bar{u}),\bar{y})<\epsilon$ and that
$(\bar{u},\sigma_{\!f}^m(\bar{u}))$ is a
$\gamma_3^{}$-quasi-expanding string of $F$.
\end{description}
\end{description}
\end{lemma}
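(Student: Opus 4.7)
My plan is to establish the dichotomy by contraposition. Fix $\epsilon>0$ and the chain $c>\gamma_2>\bar\gamma_2>\gamma_3>0$, and let $N_{\bar\gamma_2,\gamma_3}$ and $c_{\bar\gamma_2,\gamma_3}$ be the Pliss constants supplied by Lemma~\ref{lem2.2} for $H=\max_{x\in M^n}\bigl|\log\|D_{\!x}f\|_{\mathrm{co}}\bigr|$; I would then calibrate $N$ to be a sufficiently large multiple of $N_{\bar\gamma_2,\gamma_3}/c_{\bar\gamma_2,\gamma_3}$ so that the hypotheses (a)--(d) of Lemma~\ref{lem3.4} with $\gamma_0=c$ are satisfied. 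Given $\bar x\in\Theta$, I would assume that alternative~(1) fails, so by Definition~\ref{def3.5} there is $\bar z\in\mathrm{J}(\bar x,\Theta)$ witnessing the failure: for every $\mathfrak{m}\in[0,N)$ some past string $(\sigma_{\!f}^{\mathfrak{m}-r}(\bar z),\sigma_{\!f}^{\mathfrak{m}}(\bar z))$ of $F$ is not a $\gamma_3$-string. My goal is to extract from this failure a point $\bar y\in\Theta$ fulfilling alternative~(2).

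By the definition of $\mathrm{J}(\bar x,\Theta)$ and the density of $\Theta_0$, I would represent $\bar z=\lim_{k\to\infty}\sigma_{\!f}^{m_k}(\bar x_k)$ with $\bar x_k\in\Theta_0$, $\bar x_k\to\bar x$ and $m_k\to\infty$; the hypothesis on $\Theta_0$ furnishes arbitrarily long $c$-strings $(\bar x_k,\sigma_{\!f}^{\ell_k}(\bar x_k))$ of $F$. Applying the Pliss lemma to $a_i=\log\|F_{\sigma_{\!f}^i(\bar x_k)}\|_{\mathrm{co}}$ with exponents $\bar\gamma_2>\gamma_3$ yields sifting times $0<n_1^{(k)}<\cdots<n_{s_k}^{(k)}\le\ell_k$ at which $(\bar x_k,\sigma_{\!f}^{n_i^{(k)}}(\bar x_k))$ is $\gamma_3$-quasi-expanding. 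Lemma~\ref{lem3.3} then forces on each gap either $n_{i+1}^{(k)}-n_i^{(k)}\le N_{\bar\gamma_2,\gamma_3}$ or that $(\sigma_{\!f}^{n_i^{(k)}}(\bar x_k),\sigma_{\!f}^{n_{i+1}^{(k)}}(\bar x_k))$ is an $(N_{\bar\gamma_2,\gamma_3},\bar\gamma_2)$-obstruction, which automatically upgrades to an $(N_{\bar\gamma_2,\gamma_3},\gamma_2)$-obstruction because $\gamma_2>\bar\gamma_2$. If along a cofinal subsequence every gap were bounded by $N_{\bar\gamma_2,\gamma_3}$, a Pliss-density bookkeeping using $c_{\bar\gamma_2,\gamma_3}$ would make the whole $\bar x_k$-orbits uniformly $\gamma_3$-expanding; passing to the limit via the continuity of $F$ and the convergence $\sigma_{\!f}^{m_k}(\bar x_k)\to\bar z$ would then contradict the failure property of $\bar z$. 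After passing to a subsequence, each $\bar x_k$ therefore carries an $(N_{\bar\gamma_2,\gamma_3},\gamma_2)$-obstruction $(\bar u_k,\sigma_{\!f}^{m_k^\prime}(\bar u_k))$ based at $\bar u_k=\sigma_{\!f}^{n_{i(k)}^{(k)}}(\bar x_k)$ with $m_k^\prime\to\infty$.

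By compactness of $\Theta$ I would extract $\bar y=\lim_k \bar u_k$, and continuity propagates the obstruction property, so that $(\bar y,\sigma_{\!f}^m(\bar y))$ is an $(N_{\bar\gamma_2,\gamma_3},\gamma_2)$-obstruction for every $m\ge N_{\bar\gamma_2,\gamma_3}$ (the non-strict passage to the limit is absorbed by the gap $\gamma_2>\bar\gamma_2$). The two sub-clauses of~(2) are now determined by where the sifting index~$i(k)$ sits: if $i(k)=0$ along a subsequence, then $\bar u_k=\bar x_k$ converges to $\bar x$ and $\bar y$ fulfils clause~(2)(a) with $\textsl{d}_{\!f}(\bar x,\bar y)\le\epsilon$; otherwise $\bar u_k$ is the endpoint of the $\gamma_3$-quasi-expanding string $(\bar x_k,\sigma_{\!f}^{n_{i(k)}^{(k)}}(\bar x_k))$ whose base $\bar x_k$ is arbitrarily close to $\bar x$, giving clause~(2)(b) with $\bar u=\bar x_k$ and $m=n_{i(k)}^{(k)}$.

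The main obstacle I expect is the combinatorial bookkeeping needed to guarantee that the obstruction at $\bar u_k$ can be made to have length $m_k^\prime\to\infty$ uniformly in $k$ rather than remaining of bounded length: unless the obstruction lengths diverge, the limit point $\bar y$ will only produce an obstruction of bounded length, which is too weak for clause~(2). The four inequalities~(a)--(d) of Lemma~\ref{lem3.4} pin down exactly the relation between $N$, $N_{\bar\gamma_2,\gamma_3}$, $c_{\bar\gamma_2,\gamma_3}$, $N_{\gamma_2,\bar\gamma_2}$ and $c_{\gamma_2,\bar\gamma_2}$ that is required to sustain arbitrarily long obstructions, and verifying these against the failure of the $(N,\gamma_3)$-set property at $\mathrm{J}(\bar x,\Theta)$ is what fixes $N$ and drives the whole argument.
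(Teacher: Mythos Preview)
Your overall architecture---approximate a point $\bar z\in\mathrm{J}(\bar x,\Theta)$ by $\sigma_{\!f}^{m_k}(\bar x_k)$ with $\bar x_k\in\Theta_0$, build the set of $\gamma_3$-Pliss times along each $\bar x_k$-orbit, and dichotomize on the size of the gap straddling $m_k$---is the same skeleton the paper uses. The genuine gap is in how you manufacture $\bar y$. If $\bar u_k$ starts an $(N_{\bar\gamma_2,\gamma_3},\bar\gamma_2)$-obstruction of length $L_k$ and you set $\bar y=\lim_k\bar u_k$, continuity only gives an $(N_{\bar\gamma_2,\gamma_3},\gamma_2)$-obstruction at $\bar y$ for lengths $\le\liminf_k L_k$; nothing in your argument forces $L_k\to\infty$. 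Your proposed remedy via Lemma~\ref{lem3.4} does not close this: that lemma takes as input a long $\gamma_0$-string together with an obstruction and outputs a $\gamma_3$-quasi-expanding string that fails to be a $\gamma_1$-string; it does not produce obstructions of arbitrary length, and in the paper it is used only later, in Theorem~\ref{thm4.1}, not here.

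The paper avoids the limiting step altogether by a compactness device that also uses the hypothesis (which your argument never invokes) that $F$ is not expanding on $\Theta$. One first defines
\[
\Theta'=\bigl\{\bar y\in\Theta:\ (\bar y,\sigma_{\!f}^m(\bar y))\ \text{is an}\ (N_{\bar\gamma_2,\gamma_3},\gamma_2)\text{-obstruction for every}\ m\ge N_{\bar\gamma_2,\gamma_3}\bigr\},
\]
which is nonempty precisely because $F$ is not expanding. Then $N>N_{\bar\gamma_2,\gamma_3}$ is chosen by compactness of $\Theta$ so that \emph{any} $(N_{\bar\gamma_2,\gamma_3},\bar\gamma_2)$-obstruction of length exceeding $N$ must start within $\epsilon$ of $\Theta'$; this is where the slack $\gamma_2>\bar\gamma_2$ is spent. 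With this $N$ fixed, one looks only at the single gap $[k^-,k^+]$ containing $m_k$, where $k^{\pm}$ are the nearest Pliss times on either side (the paper uses the full infinite set of Pliss times, available since $\bar x_k\in\Theta_0$, so that $k^+\ge m_k$ always exists). If $\liminf_k(k^+-k^-)\le N$, the $(N,\gamma_3)$-set property holds at $\bar z$; otherwise $k^+-k^->N$ makes $(\sigma_{\!f}^{k^-}(\bar x_k),\sigma_{\!f}^{k^+}(\bar x_k))$ a long $\bar\gamma_2$-obstruction, so $\sigma_{\!f}^{k^-}(\bar x_k)$ lies within $\epsilon$ of $\Theta'$, and one takes $\bar y$ \emph{from} $\Theta'$ rather than as a limit. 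Clauses~(a) and~(b) then split according to whether $k^-=0$ or $k^->0$.
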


\begin{proof}
The argument is a modification of that of
\cite[Lemma~II-6]{Man87}. We denote by $\Theta^\prime$ the set of
points $\bar{y}\in\Theta$ such that
$(\bar{y},\sigma_{\!f}^m(\bar{y}))$ is an
$(N_{\bar{\gamma}_2^{},\gamma_3^{}},\gamma_2^{})$-obstruction of $F$
for all $m\ge N_{\bar{\gamma}_2^{},\gamma_3^{}}$. Since $F$ is not
expanding on $\Theta$ by hypothesis, the set $\Theta^\prime$ is
obviously non-void.

It is easy to check that there exists $N=N_{\epsilon;
\gamma_2^{},\bar{\gamma}_2^{},\gamma_3^{}}>N_{\bar{\gamma}_2^{},\gamma_3^{}}$
such that when $(\bar{y},\sigma_{\!f}^m(\bar{y}))$ is an
$(N_{\bar{\gamma}_2^{},\gamma_3^{}},\bar{\gamma}_2^{})$-obstruction
of $F$ in $\Theta$ for some $m>N$, then
$\textsl{d}_{\!f}(\bar{y},\Theta^\prime)<\epsilon$. Here we have used the
hypothesis $\gamma_2^{}>\bar{\gamma}_2^{}$, the $\mathrm{C}^1$-smoothness of
$f$, and the compactness of $\Theta$.

Given any $\bar{x}\in\Theta$ and any
$\bar{z}\in\mathrm{J}(\bar{x},\Theta)$, there exists a
sequence $\{\bar{x}_k\}_{k\ge 1}$ in $\Theta_0$ converging to
$\bar{x}$ and satisfying
$\bar{z}=\lim\limits_{k\to+\infty}\sigma_{\!f}^{m_k}(\bar{x}_k)$,
where $m_k$ converges to $+\infty$ as $k$ tends to $+\infty$. For any $k\ge 1$, define the set of integers
\begin{equation*}
\mathscr{S}_k=\left\{m>0\,|\,\left(\bar{x}_k,\sigma_{\!f}^m(\bar{x}_k)\right)\textrm{
is a } \gamma_3^{}\textrm{-quasi-expanding string of
$F$}\right\}\cup\{0\}.
\end{equation*}
As $\gamma_3^{}<c$ and $\bar{x}_k\in\Theta_0$, from
Lemma~\ref{lem2.2} it follows easily that $\mathscr{S}_k$ is
infinite. For any $k\ge 1$, set
\begin{equation*}
k^+=\min\mathscr{S}_k\cap[m_k,+\infty)\quad \textrm{and}\quad
k^-=\max\mathscr{S}_k\cap [0,m_k).
\end{equation*}

Suppose that $\liminf\limits_{k\to+\infty}(k^+-k^-)\le N$. Then,
there exists some integer $\mathfrak m_{\bar{z}}\in[0,N]$ which
satisfies that $\sigma_{\!f}^{\mathfrak m_{\bar{z}}}(\bar{z})$ is
the limit of some subsequence of
$\left\{\sigma_{\!f}^{k^+}(\bar{x}_k)\,|\,k\ge 1\right\}$. Hence,
$\left(\sigma_{\!f}^{\mathfrak
m_{\bar{z}}-r}(\bar{z}),\sigma_{\!f}^{\mathfrak
m_{\bar{z}}}(\bar{z})\right)$ is a $\gamma_3^{}$-string of $F$ for
all $r\ge 1$ (here we use the property $m_k\to+\infty$).
If this holds for all $\bar{z}\in\mathrm{J}(\bar{x},\Theta)$ then,
$\mathrm{J}(\bar{x},\Theta)$ is an $(N,\gamma_3^{})$-set of $F$. If
this is not the case, the above argument shows that we can always
pick some $\bar{z}\in \mathrm{J}(\bar{x},\Theta)$ such that for any
sufficiently large $k$, $k^+-k^->N$. Hence
$k^+-k^->N_{\bar{\gamma}_2^{},\gamma_3^{}}$ because of
$N>N_{\bar{\gamma}_2^{},\gamma_3^{}}$. Then, by Lemmas~\ref{lem2.2}
and~\ref{lem3.3}, it follows that
$\left(\sigma_{\!f}^{k^-}(\bar{x}_k),\sigma_{\!f}^{k^+}(\bar{x}_k)\right)$
is an
$(N_{\bar{\gamma}_2^{},\gamma_3^{}},\bar{\gamma}_2^{})$-obstruction
of $F$ in $\Theta$. So,
$\textsl{d}_{\!f}\left(\sigma_{\!f}^{k^-}(\bar{x}_k),\Theta^\prime\right)<\epsilon$
for sufficiently large $k$. If $k^-=0$ for all sufficiently large
$k$ that satisfy
$\textsl{d}_{\!f}\left(\sigma_{\!f}^{k^-}(\bar{x}_k),\Theta^\prime\right)<\epsilon$,
then $\textsl{d}_{\!f}(\bar{x}_k,\Theta^\prime)<\epsilon$ and since
$\bar{x}_k\to \bar{x}$ we obtain
$\textsl{d}_{\!f}(\bar{x},\Theta^\prime)\le\epsilon$. Taking
$\bar{y}\in\Theta^\prime$ such that
$\textsl{d}_{\!f}(\bar{x},\bar{y})\le\epsilon$, it follows that
$\bar{y}$ satisfies Lemma~\ref{lem3.6} and meanwhile the stipulation
\textbf{a)}. On the other hand, if for an unbounded set of $k$ we have
$k^->0$, we can take $\bar{y}\in\Theta^\prime$ such that
$\textsl{d}_{\!f}\left(\sigma_{\!f}^{k^-}(\bar{x}_k),\bar{y}\right)<\epsilon$
and then this point $\bar{y}$, the point $\bar{u}=\bar{x}_k$ and
$m=k^-$ satisfy the requirements of Lemma~\ref{lem3.6} and the item
\textbf{b)}.

This completes the proof of Lemma~\ref{lem3.6}.
\end{proof}
\section{A sifting-shadowing combination}\label{sec4}%

This section is devoted to the most important argument of the
sifting-shadowing combination for the proof of our main result
Theorem~\ref{thm1}.

By a sifting-shadowing combination, we will obtain the following
criterion, which is of independent intrinsic interest.

\begin{theorem}\label{thm4.1}
Let there be given the forward invariant subset $\Lambda\subset M^n$ of a
$\mathrm{C}^1$-class endomorphism $f\colon M^n\rightarrow M^n$, whose closure
$\mathrm{Cl}_{M^n}(\Lambda)$ in $M^n$ does not contain any critical points of $f$. Assume
there is the number $c>0$ for which there holds the inequality:
\begin{equation*}
\limsup_{k\to+\infty}\frac{1}{k}\sum_{i=0}^{k-1}\log\|D_{\!f^i(x)}f\|_{\mathrm{co}}\ge
c\qquad\forall x\in\Lambda.
\end{equation*}
If $\Lambda\subseteq\mathrm{Per}(f)$ then, either $f$ is expanding on
$\mathrm{Cl}_{M^n}(\Lambda)$ or for every neighborhood $V$ of $\Lambda$ in
$M^n$ and every $0<\gamma^\prime<\gamma^{\prime\prime}<c$, there can
be found in $V$ a periodic orbit $P$ of $f$ with arbitrarily large
period $\tau_{\!P}$ and satisfying the following ``abnormal
inequality" property:
\begin{equation*}
\frac{1}{\tau_{\!P}}\sum_{i=0}^{\tau_{\!P}-1}\log\|D_{\!f^i(p)}f\|_{\mathrm{co}}<\gamma^{\prime\prime}
\end{equation*}
and
\begin{equation*}
\frac{1}{k}\sum_{i=1}^{k}\log\|D_{\!f^{\tau_{\!P}-i}(p)}f\|_{\mathrm{co}}>\gamma^\prime\quad \forall k=1,
\ldots, \tau_{\!P},
\end{equation*}
for some point $p\in P$.
\end{theorem}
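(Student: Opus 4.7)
The proof proceeds by contraposition. Assume $f$ is not expanding on $\mathrm{Cl}_{M^n}(\Lambda)$, and fix a neighborhood $V\supseteq\Lambda$ together with constants $0<\gamma^\prime<\gamma^{\prime\prime}<c$; the goal is to produce a periodic orbit $P\subset V$ with the asserted abnormal inequalities. The strategy is a Ma\~n\'e-style sifting-shadowing argument carried out on the natural extension, to circumvent the non-invertibility of $f$. Set $\Theta=\mathrm{Cl}_{\varSigma_{\!f}}(\Lambda_{\!f})$, which is forward $\sigma_{\!f}$-invariant and, by Lemma~\ref{lem3.1}, projects onto $\mathrm{Cl}_{M^n}(\Lambda)$. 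Since $F_{\sigma_{\!f}^i(\bar{x})}=D_{f^i(\pi(\bar{x}))}f$, the cocycle $F$ is not expanding on $\Theta$ (otherwise $f$ would be expanding on $\pi(\Theta)$). The dense subset $\Theta_0:=\Lambda_{\!f}$ of $\Theta$ inherits the pointwise $\limsup\ge c$ hypothesis, so Lemma~\ref{lem3.6} applies.

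Next, select constants $c>\gamma_0>\gamma_1>\gamma_2>\bar{\gamma}_2>\gamma_3>\gamma^\prime$ with $\gamma_1<\gamma^{\prime\prime}$, and a small $\epsilon>0$ to be tuned via the uniform continuity of $\log\|Df\|_{\mathrm{co}}$ on the critical-point-free compact set $\mathrm{Cl}_{M^n}(\Lambda)$. Lemma~\ref{lem3.6}, applied at an appropriate $\bar{x}\in\Theta_0$ with $\pi(\bar{x})\in V$, yields a point $\bar{y}\in\Theta$ for which $(\bar{y},\sigma_{\!f}^m(\bar{y}))$ is an $(N_{\bar{\gamma}_2,\gamma_3},\gamma_2)$-obstruction of $F$ for every $m\ge N_{\bar{\gamma}_2,\gamma_3}$; case~(1) of that lemma cannot hold universally because $F$ is not expanding on $\Theta$. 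By density of $\Lambda_{\!f}$ in $\Theta$ and uniform continuity, one can select $\bar{x}^\ast\in\Lambda_{\!f}$ so close to $\bar{y}$ that the obstruction inequality transfers to $\bar{x}^\ast$ on any prescribed prefix $[0,L)$. Because $\pi(\bar{x}^\ast)\in\Lambda\subseteq\mathrm{Per}(f)$ has periodic Lyapunov average $\ge c>\gamma_0$, taking $M$ to be a large multiple of this period makes $(\bar{x}^\ast,\sigma_{\!f}^M(\bar{x}^\ast))$ a $\gamma_0$-string of $F$ that contains the transferred $(\mathfrak{n},\gamma_2)$-obstruction $(\bar{x}^\ast,\sigma_{\!f}^\ell(\bar{x}^\ast))$ for suitable $\mathfrak{n}<\ell<M$. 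Tuning $M,\ell,\mathfrak{n}$ so as to fulfil the numerical hypotheses (a)--(d) of Lemma~\ref{lem3.4}, that lemma produces $k\in[\ell,M)$ such that $(\bar{x}^\ast,\sigma_{\!f}^k(\bar{x}^\ast))$ is a $\gamma_3$-quasi-expanding string of $F$ yet not a $\gamma_1$-string. Projecting, $(x^\ast,f^k(x^\ast))$ with $x^\ast=\pi(\bar{x}^\ast)\in\Lambda\subset V$ is a $\gamma_3$-quasi-expanding orbit-string of $f$ whose total average lies strictly below $\gamma_1<\gamma^{\prime\prime}$.

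To close this string into a genuine periodic orbit, apply Theorem~\ref{thm2.1} with tolerance $\epsilon$ and expansion rate $\lambda=\gamma_3$. Since $x^\ast$ is $f$-periodic and $f^k(x^\ast)$ lies on its finite periodic orbit, one may chain $(x^\ast,f^k(x^\ast))$ with finitely many further $\gamma_3$-quasi-expanding return-segments constructed in the same way at nearby points of $\Lambda_{\!f}$, arranging the successive endpoints to be $\delta(\epsilon,\gamma_3)$-close. Theorem~\ref{thm2.1} then supplies a periodic point $p$ of period $\tau_P\ge k$ whose orbit $P$ shadows the pseudo-orbit within $\epsilon$, hence lies in $V$, and satisfies $\frac{1}{m}\sum_{j=1}^{m}\log\|D_{f^{\tau_P-j}(p)}f\|_{\mathrm{co}}\ge\gamma_3-\epsilon>\gamma^\prime$ for all $1\le m\le\tau_P$. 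Uniform continuity of $\log\|Df\|_{\mathrm{co}}$ on the critical-free compact set keeps the total periodic average within $O(\epsilon)$ of the pseudo-orbit's, hence below $\gamma^{\prime\prime}$. Making $M$ (and therefore $k$) arbitrarily large forces $\tau_P$ arbitrarily large. The principal difficulty will be the middle step: transferring an obstruction produced at a point $\bar{y}\in\Theta$ that a priori need not belong to $\Lambda_{\!f}$ onto a nearby periodic lift $\bar{x}^\ast\in\Lambda_{\!f}$, while simultaneously arranging a long $\gamma_0$-string at that same $\bar{x}^\ast$ and satisfying the four numerical inequalities of Lemma~\ref{lem3.4}; the shadowing closure afterwards is then essentially mechanical.
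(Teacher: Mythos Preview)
Your overall plan (pass to the natural extension, sift out a $\gamma_3$-quasi-expanding string whose total average is below $\gamma_1$, then close by shadowing) is the right strategy, and the first half---producing one such ``abnormal'' string via Lemmas~\ref{lem3.6} and~\ref{lem3.4}---is essentially correct. The genuine gap is in the closing step. Theorem~\ref{thm2.1} requires a \emph{periodic} chain of quasi-expanding strings: the terminal point $f^{n_k}(x_k)$ must be $\delta$-close to the initial point $x_0$. Your single string $(x^\ast,f^k(x^\ast))$ has no such recurrence built in, and the sentence ``chain $(x^\ast,f^k(x^\ast))$ with finitely many further $\gamma_3$-quasi-expanding return-segments constructed in the same way at nearby points'' does not supply one. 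A segment produced by a second application of Lemmas~\ref{lem3.6}/\ref{lem3.4} begins near an obstruction point $\bar{y}'$ that has no a~priori relation to $f^k(x^\ast)$, and there is no mechanism forcing the final endpoint back near $x^\ast$. Moreover, even if you managed to concatenate several abnormal segments, the connecting pieces (the $m_i$-segments in the paper's notation) can have average as large as $H=\max_x\log\|D_xf\|_{\mathrm{co}}$, and you give no control ensuring the combined average of the whole periodic pseudo-orbit stays below $\gamma^{\prime\prime}$.

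The paper closes the loop by a pigeonhole device you are missing. It fixes $s=s(\delta/4)$ so large that among any $s$ points of $\varSigma_{\!f}$ two must be $\delta/4$-close, and then chooses a \emph{nested ladder} of constants $\hat\gamma=\gamma_3^{(0)}<\bar\gamma_2^{(0)}<\gamma_2^{(0)}<\gamma_1^{(0)}<\gamma_3^{(1)}<\cdots<\gamma_1^{(s)}=\bar\gamma$. Starting from any $\bar{x}_0\in\Theta\setminus\varSigma_0$, one applies Lemma~\ref{lem3.6} (using both alternatives a) and b)) and then Lemma~\ref{lem3.4} with the constants of level~$i$ to obtain a $\gamma_3^{(i)}$-quasi-expanding string $(\bar z_i,\sigma_{\!f}^{k_i}(\bar z_i))$ that is not a $\gamma_1^{(i)}$-string; since $\gamma_1^{(i)}<\gamma_3^{(i+1)}$, the endpoint lies in $\Theta\setminus\varSigma_{i+1}$, so the construction can be iterated. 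After $s$ steps the pigeonhole choice of $s$ forces two of the starting points $\bar u_i,\bar u_{j+1}$ to be $\delta/4$-close, yielding a genuine periodic $\hat\gamma$-quasi-expanding $\delta$-pseudo-orbit. The total average is controlled by imposing at each step the inequality $k_i\gamma_1^{(i+1)}+m_iH\le(m_i+k_i)\bar\gamma$ (possible because $k_i$ may be taken arbitrarily large). This iterated-ladder-plus-pigeonhole step is the missing idea; without it your proposal does not produce a periodic orbit with the abnormal inequality.
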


We are going to prove this theorem following the framework of the
proof of Ma\~{n}\'{e}~\cite[Theorem~II-1]{Man87} that was
clarified independently by \cite{MS,ZG}. Let
$\textsl{d}(\cdot,\cdot)$ be a metric on
$M^n$ compatible with the natural norm $\|\cdot\|$ on $T\!{M^n}$. The $\eta$-neighborhood of a set $X\subset M^n$, denoted by $B_\eta(X)$, is the union of the $\eta$-balls $B_\eta(x)$ around the points $x\in X$.

\begin{proof}
Since $f$ does not have any critical points in $\mathrm{Cl}_{M^n}(\Lambda)$ and it is
of class $\mathrm{C}^1$, $f$ is $\mathrm{C}^1$-class locally diffeomorphic restricted to a closed neighborhood of $\Lambda$. For simplicity, there is no loss of generality in assuming that $f$
is $\mathrm{C}^1$-class locally diffeomorphic on the whole manifold $M^n$.

Let $\Lambda\not=\varnothing$. If $f$ is expanding on
$\mathrm{Cl}_{M^n}(\Lambda)$, we may stop proving here. Now we assume $f$ is
not uniformly expanding on $\Lambda$.

To prove Theorem~\ref{thm4.1}, we consider the natural extensions
\begin{equation*}
\sigma_{\!f}\colon\varSigma_{\!f}\rightarrow\varSigma_{\!f}\quad
\textrm{and}\quad F\colon T\varSigma_{\!f}\rightarrow
T\varSigma_{\!f}
\end{equation*}
associated to $f$ defined as in Section~\ref{sec3.1}. Write
$\Theta_0=\Lambda_{\!f}$ and $\Theta=\mathrm{Cl}_{\varSigma_{\!f}}(\Lambda_{\!f})$,
where $\Lambda_{\!f}$ is the natural extension of $\Lambda$ under
$f$ defined in the way as in Section~\ref{sec3.1}. So, by the hypothesis
of Theorem~\ref{thm4.1}, $F$ is not expanding on $\Theta$ such that
\begin{equation*}
\limsup_{k\to+\infty}\frac{1}{k}\sum_{i=0}^{k-1}\log\|F_{\!\sigma_{\!f}^i(\bar{x})}\|_{\mathrm{co}}\ge
c\qquad\forall \bar{x}\in\Theta_0,
\end{equation*}
where $c$ is the constant given in the statement of
Theorem~\ref{thm4.1}. Moreover,
$\Theta=\Omega({\sigma_{\!f}|\Theta})$, the nonwandering set of $\sigma_{\!f}$ restricted to $\Theta$.

Let $0<\gamma^\prime<\gamma^{\prime\prime}<c$ be as in the statement
of Theorem~\ref{thm4.1}. Then, from now on we fix any $\gamma_0^{}$
with $\gamma^\prime<\gamma_0^{}<\gamma^{\prime\prime}$. To any
$\bar{x}\in\Theta_0$, there can be found a sequence of positive
integers $n_j(\bar{x})\uparrow+\infty$ satisfying
\begin{equation*}
\frac{1}{n_j(\bar{x})}\sum_{i=0}^{n_j(\bar{x})-1}\log\|F_{\!\sigma_{\!f}^i(\bar{x})}\|_{\mathrm{co}}>\gamma_0^{}\qquad
\forall j=1,2,\dotsc.
\end{equation*}
Let $\bar{\gamma}$ and $\hat{\gamma}$ be two arbitrary numbers
such that $\gamma_0^{}>\bar{\gamma}>\hat{\gamma}>\gamma^\prime$.
Choose $\eta\in(0,1)$ sufficiently small satisfying
\begin{equation*}
\hat{\gamma}-\eta>\gamma^\prime\quad\textrm{and}\quad\bar{\gamma}+\eta<\gamma^{\prime\prime}.
\end{equation*}
Take $\varepsilon>0$ so small that
\begin{equation*}
\hat{\gamma}-\varepsilon>\gamma^\prime,\quad
B_{2\varepsilon}(\Lambda)\subset V
\end{equation*}
and that if $\textsl{d}(x,y)\le\varepsilon$ for any $x,y\in M^n$
then
\begin{equation*}
\left|\log\|D_{\!x}f\|_{\mathrm{co}}-\log\|D_{\!y}f\|_{\mathrm{co}}\right|<\eta.
\end{equation*}
Let $\delta=\delta(\varepsilon,\hat{\gamma})$ be given as in the
statement of Theorem~\ref{thm2.1} with
$\lambda=\hat{\gamma}$.

From the compactness of $\varSigma_{\!f}$, we can choose the
positive integer $s=s(\delta/4)$ which satisfies that for any given
sequence $\{\bar{x}_1,\bar{x}_2,\ldots,\bar{x}_s\}$ in
$\varSigma_{\!f}$ there always can be found $i,j$ with $1\le
i\not=j\le s$ such that
$\textsl{d}_{\!f}(\bar{x}_i,\bar{x}_j)<\delta/4$. Now, we define
$4(s+1)$ positive numbers as follows:
\begin{equation*}
\left\{\uwave{\gamma_1^{(i)},
\gamma_2^{(i)},\bar{\gamma}_2^{(i)},\gamma_3^{(i)}}\colon i=0,1,\ldots,s\right\}
\end{equation*}
such that
$$
\begin{array}{rcl}
\hat{\gamma}=\uwave{\gamma_3^{(0)}<\bar{\gamma}_2^{(0)}<\gamma_2^{(0)}<\gamma_1^{(0)}}<\uwave{\gamma_3^{(1)}<\bar{\gamma}_2^{(1)}<\gamma_2^{(1)}<\gamma_1^{(1)}}<\cdots<
\gamma_1^{(i-1)}&\\
<\uwave{\gamma_3^{(i)}<\bar{\gamma}_2^{(i)}<\gamma_2^{(i)}<\gamma_1^{(i)}}<\gamma_3^{(i+1)}<\cdots<\gamma_1^{(s-1)}&\\
<\uwave{\gamma_3^{(s)}<\bar{\gamma}_2^{(s)}<\gamma_2^{(s)}<\gamma_1^{(s)}}&=\bar{\gamma}.
\end{array}
$$
Let
\begin{equation*}
\mathfrak
n_i=N_{\frac{\delta}{4};\gamma_2^{(i)},\bar{\gamma}_2^{(i)},\gamma_3^{(i)}}>N_{\bar{\gamma}_2^{(i)},\gamma_3^{(i)}}
\end{equation*}
be the constants determined by Lemma~\ref{lem3.6} in the case of
letting $\epsilon=\frac{\delta}{4}, \gamma_2^{}=\gamma_2^{(i)},
\bar{\gamma}_2^{}=\bar{\gamma}_2^{(i)}$ and
$\gamma_3^{}=\gamma_3^{(i)}$ for all $0\le i\le s$.

For any $i=0,\ldots,s$, let $\varSigma_i$ be the compact set which
consists of points $\bar{z}\in\Theta$ such that there exists an
integer $\mathfrak m_{\bar{z}}\in[0,\mathfrak n_i)$ verifying that
$\left(\sigma_{\!f}^{\mathfrak m_{\bar{z}}-r}(\bar{z}),
\sigma_{\!f}^r(\sigma_{\!f}^{\mathfrak
m_{\bar{z}}-r}(\bar{z}))\right)$ is a $\gamma_3^{(i)}$-string
of $F$ for all $r>0$, i.e.,
\begin{equation*}
\frac{1}{r}\sum_{i=0}^{r-1}\log\|F_{\!\sigma_{\!f}^i(\sigma_{\!f}^{\mathfrak
m_{\bar{z}}-r}(\bar{z}))}\|_{\mathrm{co}}\ge\gamma_3^{(i)}\qquad
\forall r>0.
\end{equation*}
It need not be $\sigma_{\!f}$-invariant. Clearly,
$\Theta\not=\bigcup_{i=0}^s\varSigma_i$, since $F$ is not expanding on
$\Theta$.

For any $0\le i\le s-1$, let us choose arbitrarily $\bar{x}_i\in\Theta\setminus\varSigma_i$. Then,
$\mathrm{J}(\bar{x}_i,\Theta)$ is not an $\left(\mathfrak
n_i,\gamma_3^{(i)}\right)$-set of $F$ because
$\bar{x}_i$ belongs to $\mathrm{J}(\bar{x}_i,\Theta)$. From Lemma~\ref{lem3.6},
there can be found $\bar{u}_{i}, \bar{y}_{i}\in\Theta$ and $m_i\ge
0$ such that $\textsl{d}_{\!f}(\bar{x}_i,\bar{u}_{i})<\delta/4$,
$\textsl{d}_{\!f}(\sigma_{\!f}^{m_i}(\bar{u}_{i}),\bar{y}_{i})<\delta/4$
and $\left(\bar{u}_{i}, \sigma_{\!f}^{m_i}(\bar{u}_{i})\right)$ is a
$\gamma_3^{(i)}$-quasi-expanding string of $F$ if $m_i>0$, and that
$\left(\bar{y}_i,\sigma_{\!f}^m(\bar{y}_i)\right)$ is an
$\left(N_{\bar{\gamma}_2^{(i)},\gamma_3^{(i)}},\gamma_2^{(i)}\right)$-obstruction
of $F$ for all $m>\mathfrak n_i$. As $\Theta_0$ is dense in
$\Theta$, it follows that when $\bar{z}_i\in\Theta_0$ is
sufficiently close to $\bar{y}_i$, there exists a large
$\ell>\mathfrak n_i$ such that
$\left(\bar{z}_i,\sigma_{\!f}^{\ell}(\bar{z}_i)\right)$ is an
$\left(N_{\bar{\gamma}_2^{(i)},\gamma_3^{(i)}},\gamma_2^{(i)}\right)$-obstruction
of $F$. Moreover, there exist infinitely many $m$ such that
\begin{equation*}
\frac{1}{m}\sum_{j=0}^{m-1}\log\|F_{\!\sigma_{\!f}^j(\bar{z}_i)}\|_{\mathrm{co}}>\gamma_0^{}.
\end{equation*}
Applying Lemma~\ref{lem3.4} to $\gamma_1^{}=\gamma_1^{(i)},
\gamma_2^{}=\gamma_2^{(i)}$ and $\gamma_3^{}=\gamma_3^{(i)}$,
because $m$ can be chosen large with respect to $\ell$, $\ell$ large
with respect to $\mathfrak n_i$, there exists $m>k_i>\ell>\mathfrak
n_i$ such that
$\left(\bar{z}_i,\sigma_{\!f}^{k_i}(\bar{z}_i)\right)$ is a
$\gamma_3^{(i)}$-quasi-expanding string of $F$ but not a
$\gamma_1^{(i)}$-string of $F$, and so not a
$\gamma_3^{(i+1)}$-string of $F$ too. Thus,
$\sigma_{\!f}^{k_i}(\bar{z}_i)$ lies in $\Theta\setminus\varSigma_{i+1}$ when
$\ell$ large enough. Further, by induction on $i$ we can construct
sequences $\left\{(\bar{u}_i,\bar{z}_i)\right\}_{i=0}^{s-1}$ and
$\left\{(m_i,k_i)\right\}_{i=0}^{s-1}$ with
$\bar{u}_i,\bar{z}_i\in\Theta$ and $m_i\ge 0,k_i\ge2$, such that:
\begin{description}
\item[1)] $\left(\bar{u}_i, \sigma_{\!f}^{m_i}(\bar{u}_i)\right)$ and $\left(\bar{z}_i,
\sigma_{\!f}^{k_i}(\bar{z}_i)\right)$ both are
$\gamma_3^{(i)}$-quasi-expanding string of $F$, where
$\bar{u}_i=\bar{z}_i$ if $m_i=0$;

\item[2)] $\left(\bar{z}_i, \sigma_{\!f}^{k_i}(\bar{z}_i)\right)$ is not a
$\gamma_1^{(i+1)}$-string of $F$;

\item[3)] $\textsl{d}_{\!f}\left(\sigma_{\!f}^{m_i}(\bar{u}_i), \bar{z}_i\right)<\delta/2$;

\item[4)] $\textsl{d}_{\!f}\left(\sigma_{\!f}^{k_i}(\bar{z}_i), \bar{u}_{i+1}\right)<\delta/2$;

\item[5)] if $H=\max\{\log\|D_{\!x}f\|_{\mathrm{co}};\ x\in M^n\}$,
then
\begin{equation*}
k_i\gamma_1^{i+1}+m_i H\le(m_i+k_i)\bar{\gamma}
\end{equation*}
for all $i=0,\dotsc,s-1$.
\end{description}
In fact, because $\bar{\gamma}>\gamma_1^{(i)}$ for all $i$, we only
need to take $k_i$ sufficiently large in the previous construction
to satisfy the conditions \textbf{1)}, \textbf{2)}, \textbf{3)}, \textbf{4)}, and \textbf{5)} above.

By the definition of $s=s(\delta/4)$ before, there can be found in
$\{\bar{u}_i\}_{i=0}^{s-1}$ two points $\bar{u}_i,\bar{u}_{j+1}$
with $i<j$ such that
$\textsl{d}_{\!f}(\bar{u}_i,\bar{u}_{j+1})<\delta/4$. It is easy to
check that the sequence
\begin{equation*}
{(\bar{u}_i, \sigma_{\!f}^{m_i}(\bar{u}_i)),
(\bar{z}_i, \sigma_{\!f}^{k_i}(\bar{z}_i))}, \dotsc,
{(\bar{u}_j, \sigma_{\!f}^{m_j}(\bar{u}_j)),
(\bar{z}_j, \sigma_{\!f}^{k_j}(\bar{z}_j))}
\end{equation*}
forms a periodic $\hat{\gamma}$-quasi-expanding of $F$
$\delta$-pseudo-orbit of $\sigma_{\!f}$ in $\Theta$. Let
\begin{equation*}
u_\ell=\pi(\bar{u}_\ell)\quad \textrm{and}\quad
z_\ell=\pi(\bar{z}_\ell)\qquad \textrm{for all }i\le\ell\le j,
\end{equation*}
where $\pi\colon\varSigma_{\!f}\rightarrow M^n$ is the natural
projection defined as in Section~\ref{sec3.1}. Then, by the definition
of the metric function $\textsl{d}_{\!f}$ of $\varSigma_{\!f}$ it
follows from Lemma~\ref{lem3.1} that the string
\begin{equation*}
{({u}_i, f^{m_i}({u}_i)), ({z}_i,
f^{k_i}({z}_i))}, \dotsc, {({u}_j,
f^{m_j}({u}_j)), ({z}_j, f^{k_j}({z}_j))}
\end{equation*}
forms a periodic $\hat{\gamma}$-quasi-expanding $\delta$-pseudo-orbit
of $f$ in $\mathrm{Cl}_{M^n}(\Lambda)$.

So, from Theorem~\ref{thm2.1} there can be found a periodic point $p$
of $f$ with period
\begin{equation*}
\tau_{p}=m_i+k_i+\cdots+m_j+k_j,
\end{equation*}
which $\varepsilon$-shadows the above $\delta$-pseudo-orbit of $f$
such that $\mathrm{Orb}_{\!f}^+(p)\subset B_{\varepsilon}(\Lambda)$
and
\begin{equation*}
\frac{1}{k}\sum_{i=1}^{k}\log\|D_{\!f^{\tau_{p}-i}(p)}f\|_{\mathrm{co}}\ge\hat{\gamma}-\varepsilon>\gamma^\prime
\end{equation*}
for all $k=1, \ldots, \tau_{p}$.

Since $k_i$ can be chosen arbitrarily large, $\tau_{p}$ can also be
arbitrarily large. The rest is to check that such $p$ satisfies the
abnormal inequality.

In fact, for all $i\le\ell\le j$, by \textbf{2)} and \textbf{5)} above we have
\begin{align*}
\sum_{t=0}^{m_\ell-1}\log\|D_{\!f^t(u_\ell)}f\|_{\mathrm{co}}+\sum_{t=0}^{k_\ell-1}\log\|D_{\!f^t(z_\ell)}f\|_{\mathrm{co}}&\le
m_\ell H+k_\ell\gamma_1^{(\ell+1)}\\
&\le(m_\ell+k_\ell)(\eta+\bar{\gamma}).
\end{align*}
Thus,
\begin{equation*}
\sum_{\ell=i}^j\left\{\sum_{t=0}^{m_\ell-1}\log\|D_{\!f^t(u_\ell)}f\|_{\mathrm{co}}+\sum_{t=0}^{k_\ell-1}\log\|D_{\!f^t(z_\ell)}f\|_{\mathrm{co}}\right\}
\le\tau_{p}\bar{\gamma}.
\end{equation*}
Because $p$ $\varepsilon$-shadows this quasi-expanding pseudo-orbit
string, we obtain that
\begin{equation*}
\sum_{t=0}^{\tau_{p}-1}\log\|D_{\!f^t(p)}f\|_{\mathrm{co}}
\le\tau_{p}(\eta+\bar{\gamma}).
\end{equation*}
Thus
\begin{equation*}
\frac{1}{\tau_{p}}\sum_{t=0}^{\tau_{p}-1}\log\|D_{\!f^t(p)}f\|_{\mathrm{co}}
\le\eta+\bar{\gamma}<\gamma^{\prime\prime}.
\end{equation*}
This ends the proof of Theorem~\ref{thm4.1}.
\end{proof}

\section{Proof of Theorem~\ref{thm1} and local diffeomorphisms of the circle}\label{sec5}%

In this section, we will prove Theorem~\ref{thm1} using the theorems proved before. Then Theorem~\ref{thm2} follows easily from Theorem~\ref{thm1}.

\subsection{Proof of Theorem~\ref{thm1}}\label{sec5.1}%
The first statement of Theorem~\ref{thm1} follows immediately from
Theorem~\ref{thm4.1} with $\Lambda=\mathrm{Per}(f)$ and $c=\lambda$.

The second part of Theorem~\ref{thm1} comes from Theorem~\ref{thm2.3}. In fact, if $\lambda_{\min}(\mu,f)>0$ then from Theorem~\ref{thm2.3}, it follows that $\mathrm{supp}(\mu)\subseteq\mathrm{Cl}_{M^n}(\mathrm{Per}(f))$; and so from the first part of Theorem~\ref{thm1} proved, we can obtain that
\begin{equation*}\begin{split}
\lambda_{\min}(\mu,f)&=\lim_{k\to+\infty}\frac{1}{k}\log\|D_{\!x}f^k\|_{\mathrm{co}}\qquad\mu\textrm{-a.e. }x\in\mathrm{Cl}_{M^n}(\mathrm{Per}(f))\\
&=\lim_{k\to+\infty}\frac{1}{k}\log\min\left\{\|(D_{\!x}f^k)v\|;\;v\in T_{\!x}{M^n}\textrm{ and }\|v\|=1\right\}\\
&\ge\lim_{k\to+\infty}\frac{1}{k}\log(C\exp(k\lambda))\\
&=\lambda.
\end{split}\end{equation*}

And the third part of Theorem~\ref{thm1} trivially holds from the first statement of this theorem. In fact, it follows from statement (1) of Theorem~\ref{thm1} that $f$ is uniformly expanding on $\Omega(f)$. As all ergodic measures of $f$ are supported on $\Omega(f)$, it follows from Ma\~{n}\'{e}'s
criterion \cite[Lemma~I-5]{Man87} as mentioned in Section~\ref{sec1.3} that there exists an integer $m\ge1$ and a constant $\lambda^\prime>0$ such that
\begin{equation*}
\int_{M^n}\log\|D_{\!x}f^m\|_{\mathrm{co}}\,d\mu\ge\lambda^\prime
\end{equation*}
for all ergodic measures $\mu$ of $f$ supported on $M^n$.
Thus, $f$ is uniformly expanding on $M^n$ from Ma\~{n}\'{e}'s
criterion once again.

This completes the proof of Theorem~\ref{thm1}.
\subsection{Proof of Theorem~\ref{thm2}}\label{sec5.2}%
Let
\begin{equation*}
T_{\!x}M^n=E(x)\oplus F(x)\quad \textrm{with }\dim E(x)=1\qquad\forall x\in\mathrm{Cl}_{M^n}(\mathrm{Per}(f))
\end{equation*}
be an $(\eta,1)$-dominated splitting given by the hypotheses of Theorem~\ref{thm2}. Then, there exists an integer $m\ge1$ such that
\begin{equation*}
\frac{\|D_{\!x}f^m|{E(x)}\|}{\;\;\,\|D_{\!x}f^m|{F(x)}\|_{\mathrm{co}}}\le \frac{1}{2}\qquad\forall x\in\mathrm{Cl}_{M^n}(\mathrm{Per}(f)).
\end{equation*}
As the minimal Lyapunov exponent $\lambda_{\min}(x,f)>0$ and is uniformly bounded away from $0$ for $x\in\mathrm{Per}(f)$, from $\dim E(x)=1$ it follows that
\begin{equation*}
\lambda_{\min}(x,f^m)=\lim_{N\to\infty}\frac{1}{N}\sum_{\ell=0}^{N-1}\log\|D_{f^{\ell m}(x)}f^m|E\|=m\lambda_{\min}(x,f)\ge\lambda
\end{equation*}
for some constant $\lambda>0$. Hence $Df^m|E$ and then $Df^m$ are nonuniformly expanding on $\mathrm{Per}(f^m)$. Thus Theorem~\ref{thm1} implies that $f^m$ is uniformly expanding on $\mathrm{Cl}_{M^n}(\mathrm{Per}(f^m))$.

This completes the proof of Theorem~\ref{thm2}.
\subsection{Local diffeomorphisms of the circle}\label{sec5.3}
As another application of our result Theorem~\ref{thm1}, we will consider a local
diffeomorphism of the unit circle in this subsection.

Let $\mathbb{T}^1$ be the unit circle. Using Theorem~\ref{thm1} we can obtain
the following result, which is indeed a special case of Theorem~\ref{thm2}.

\begin{Mthm}\label{thm3}
Let $f\colon \mathbb{T}^1\rightarrow \mathbb{T}^1$ be a $\mathrm{C}^1$-class endomorphism of
$\mathbb{T}^1$ which does not contain any critical points. If every periodic point
of $f$ has a positive Lyapunov exponent and such exponent is
uniformly bounded away from zero, then $f$ is uniformly expanding on
$\mathrm{Cl}_{\mathbb{T}^1}(\mathrm{Per}(f))$ and moreover, for any ergodic measure $\mu$ of $f$, either its support $\mathrm{supp}(\mu)$ is contained in $\mathrm{Cl}_{\mathbb{T}^1}(\mathrm{Per}(f))$ or its Lyapunov exponent is zero.
\end{Mthm}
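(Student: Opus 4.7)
The plan is to reduce Theorem~\ref{thm3} to Theorem~\ref{thm1} by translating the one-dimensional data into the nonuniform expansion hypothesis on $\mathrm{Per}(f)$, and then to sharpen the ergodic-measure dichotomy of Theorem~\ref{thm1}(2) using a Liao-type contracting-measure principle in dimension one.

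First, since $\mathbb{T}^1$ is one-dimensional and $f$ has no critical points, $f\in\mathrm{Diff}_{\mathrm{loc}}^1(\mathbb{T}^1)$ and $\|D_{\!x}f\|_{\mathrm{co}}=|f'(x)|>0$ for every $x$; in particular, for any $f$-invariant ergodic probability $\mu$, $\lambda_{\min}(\mu,f)$ coincides by the Birkhoff ergodic theorem with $\int\log|f'|\,d\mu$, the unique Lyapunov exponent of $\mu$. To verify nonuniform expansion on $\mathrm{Per}(f)$, fix a periodic $p$ of period $\tau$: the sequence $\log|f'(f^i(p))|$ is $\tau$-periodic in $i$, so the limsup of its Cesàro averages equals the orbit average $\tfrac{1}{\tau}\sum_{i=0}^{\tau-1}\log|f'(f^i(p))|$, which is exactly the Lyapunov exponent of the periodic orbit through $p$ and by hypothesis is bounded below by a uniform $\lambda>0$. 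Theorem~\ref{thm1}(1), applied with $\Lambda=\mathrm{Per}(f)$, then furnishes constants $C,\lambda^\prime>0$ such that
\begin{equation*}
\|(D_{\!x}f^k)v\|\ge C\|v\|\exp(k\lambda^\prime)\qquad\forall v\in T_{\!x}\mathbb{T}^1,\ x\in\mathrm{Cl}_{\mathbb{T}^1}(\mathrm{Per}(f)),\ k\ge 1,
\end{equation*}
which is the uniform expansion claim on $\mathrm{Cl}_{\mathbb{T}^1}(\mathrm{Per}(f))$.

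For the ergodic-measure statement, let $\mu$ be any ergodic probability of $f$; Theorem~\ref{thm1}(2) yields the dichotomy $\lambda_{\min}(\mu,f)\le 0$ \emph{or} ($\lambda_{\min}(\mu,f)\ge\lambda^\prime$ and $\mathrm{supp}(\mu)\subseteq\mathrm{Cl}_{\mathbb{T}^1}(\mathrm{Per}(f))$). The second alternative gives the desired conclusion directly. In the first alternative I must rule out $\lambda_{\min}(\mu,f)<0$, and this is the only non-routine step: here I invoke the one-dimensional analogue of Liao's contracting-measure theorem recalled immediately after Theorem~\ref{thm2.3}, asserting that any ergodic $\mu$ with $\int\log|f'|\,d\mu<0$ is supported on a periodic attractor; such an attractor would be a periodic orbit with negative Lyapunov exponent, contradicting the hypothesis. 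Hence $\lambda_{\min}(\mu,f)=0$, yielding the stated ``$\mathrm{supp}(\mu)\subseteq\mathrm{Cl}_{\mathbb{T}^1}(\mathrm{Per}(f))$ or Lyapunov exponent $0$'' dichotomy. The identifications $\|D_{\!x}f\|_{\mathrm{co}}=|f'(x)|$ and $\lambda_{\min}=\lambda_{\max}$ forced by $\dim\mathbb{T}^1=1$ make all remaining steps routine translations from the framework of Section~\ref{sec1.2}, so the dimension-one exclusion of strictly negative exponent is the only delicate point of the proof.
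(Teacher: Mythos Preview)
Your proof is correct and follows essentially the same route as the paper's: verify that the one-dimensional hypothesis makes $f$ nonuniformly expanding on $\mathrm{Per}(f)$, apply Theorem~\ref{thm1} to obtain uniform expansion on $\mathrm{Cl}_{\mathbb{T}^1}(\mathrm{Per}(f))$ together with the dichotomy $\lambda_{\min}(\mu,f)\le 0$ versus $\mathrm{supp}(\mu)\subseteq\mathrm{Cl}_{\mathbb{T}^1}(\mathrm{Per}(f))$, and then invoke Liao~\cite{L73} to exclude $\lambda_{\min}(\mu,f)<0$. The paper additionally separates out the case $\mathrm{Per}(f)=\varnothing$ and names Theorem~\ref{thm2.3} explicitly, but since Theorem~\ref{thm1}(2) is itself derived from Theorem~\ref{thm2.3} and applies vacuously when $\mathrm{Per}(f)=\varnothing$, your unified treatment covers that case as well.
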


\begin{proof}
First, we assume $\mathrm{Per}(f)\not=\varnothing$. Then $\mathrm{Per}(f)$ is nonuniformly expanding by $f$ and then
the statement comes immediately from Theorems~\ref{thm1} and \ref{thm2.3}. We notice that from \cite{L73}, it follows that for any ergodic measure $\mu$ of $f$, its Lyapunov exponent $\lambda(\mu)\ge0$.

If $\mathrm{Per}(f)=\varnothing$, then from Theorem~\ref{thm2.3} we see that for any ergodic measure $\mu$ of $f$, its Lyapunov exponent must be zero.

This proves Theorem~\ref{thm3}.
\end{proof}

We here give a remark on the proof of Theorem~\ref{thm3} above. It
is known, from~\cite{Dai09}, that in the $1$-dimensional case Lyapunov
exponent is continuous with respect to ergodic measures in the sense
of weak-$*$ topology. However, although $\mathrm{Per}(f)$ is dense
in $\mathrm{Cl}_{\mathbb{T}^1}(\mathrm{Per}(f))$, one still cannot guarantee, without any generic condition, that
every ergodic measure of $f$ supported on
$\mathrm{Cl}_{\mathbb{T}^1}(\mathrm{Per}(f))$ can be arbitrarily approximated by
periodic measures. So, the proof of Theorem~\ref{thm3} presented
above is of interest itself.

In the situation of Theorem~\ref{thm3}, if $f$ does not have any periodic points, then from Theorem~\ref{thm3} we see that
\begin{equation*}
|f_{f^n(x)}^\prime\cdots f_x^\prime|^{1/n}\to 1\quad \textrm{as }n\to+\infty
\end{equation*}
uniformly for $x\in\mathbb{T}^1$.
\section{Appendix: closing up quasi-expanding strings}\label{sec6}

In the section, we will prove Theorem~\ref{thm2.1} stated in
Section~\ref{sec2.1} following the standard way, see Gan~\cite{G}, for
example.

For this, we need a simple sequence version of shadowing lemma
borrowed from~\cite{G}. In the following lemma, we let
$Y=\left\{(x_i)_{i\in\mathbb{Z}}\,|\,x_i\in X_i\right\}$ where $X_i$
is an $n$-dimensional Euclidean space endowed with the norm
$\|\cdot\|_i$ for every $i$. Under the supremum norm
$\|y\|=\sup_{i\in\mathbb{Z}}\|x_i\|_i$ for $y=(x_i)$, $Y$ is a
Banach space. We only consider the mapping $\Phi\colon Y\rightarrow
Y$ which has the form
\begin{equation*}
(\Phi y)_{i+1}=\Phi_i x_i\qquad\textrm{where }\Phi_i\colon X_i\rightarrow X_{i+1}\quad\forall i\in\mathbb{Z}.
\end{equation*}
For any $r>0$, let $X_i(r)=\{x_i\in X_i;\ \|x_i\|_i\le r\}$.

Now, the sequence version of shadowing lemma for expanding
pseudo-orbit can be described as follows:

\begin{lemma}\label{lem7.1}
Let there be given numbers $\gamma\in(0,1), \epsilon>0$ with
\begin{equation*}
\epsilon_1:=2\epsilon(1+\gamma)/(1-\gamma)<1
\end{equation*}
and $r>0$. Let
$\varsigma=\frac{(1-\gamma)(1-\epsilon_1)}{2(1+\gamma)}$ and
$\delta\in(0,{r}{\varsigma}]$. Assume
$\Phi=(\Phi_i)_{i\in\mathbb{Z}}\colon Y\rightarrow Y$ has the
form
\begin{equation*}
\Phi_i=H_i+\phi_i\colon X_i(r)\rightarrow X_{i+1}
\end{equation*}
where $H_i\colon X_i\rightarrow X_{i+1}$ is a linear isomorphism. If
there holds that
\begin{equation*}
\|H_i\|_{\mathrm{co}}\ge\gamma^{-1},\quad \mathrm{Lip}(\phi_i)\le
\varsigma,\quad
and\quad\|\phi_i(\mathbf{0})\|\le\delta\qquad\forall i\in\mathbb{Z},
\end{equation*}
then $\Phi$ has a unique fixed point $v$ in $Y$ with
$\|v\|\le\delta\varsigma^{-1}$.
\end{lemma}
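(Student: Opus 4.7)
The plan is to reduce Lemma~\ref{lem7.1} to the Banach fixed point theorem applied to an inverse-cocycle reformulation. Since each $H_i$ is a linear isomorphism with $\|H_i\|_{\mathrm{co}} \geq \gamma^{-1}$, we have $\|H_i^{-1}\| \leq \gamma < 1$, so the equation $\Phi y = y$, that is $H_i x_i + \phi_i(x_i) = x_{i+1}$ for every $i$, can be rewritten as $y = Ty$ where
\begin{equation*}
(Ty)_i = H_i^{-1} x_{i+1} - H_i^{-1}\phi_i(x_i).
\end{equation*}
The fixed points of $T$ coincide with the fixed points of $\Phi$, with the advantage that $T$ now pulls forward indices back using the genuine contraction $H_i^{-1}$.

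The substance is to exhibit a closed ball on which $T$ is a strict contraction. I would restrict $T$ to $Y_r := \{y \in Y : \|x_i\|_i \leq r \text{ for every } i\}$, which is complete under the sup norm and is exactly the set on which every $\Phi_i$ is defined. Using $\|\phi_i(x_i)\| \leq \delta + \varsigma\|x_i\|$ I estimate
\begin{equation*}
\|(Ty)_i\| \leq \gamma\bigl(\|x_{i+1}\| + \delta + \varsigma\|x_i\|\bigr) \leq \gamma r(1+\varsigma) + \gamma\delta,
\end{equation*}
while a direct subtraction gives
\begin{equation*}
\|Ty - Ty'\| \leq \gamma(1+\varsigma)\|y - y'\|.
\end{equation*}
The single algebraic inequality $2\gamma\varsigma \leq 1-\gamma$, which is immediate from the definition $\varsigma = (1-\gamma)(1-\epsilon_1)/(2(1+\gamma))$ because $\gamma(1-\epsilon_1) \leq 1+\gamma$, together with the hypothesis $\delta \leq r\varsigma$, then yields both $\gamma r(1+\varsigma) + \gamma\delta \leq r$ and $\gamma(1+\varsigma) < 1$, so $T$ maps $Y_r$ into itself as a strict contraction.

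The Banach contraction principle applied to $T$ on $Y_r$ delivers a unique fixed point $v$, which is the desired fixed point of $\Phi$. The norm estimate $\|v\| \leq \delta/\varsigma$ comes from the standard Banach bound
\begin{equation*}
\|v\| \leq \frac{\|T\mathbf{0}\|}{1-\gamma(1+\varsigma)} \leq \frac{\gamma\delta}{1-\gamma-\gamma\varsigma},
\end{equation*}
and one more application of $2\gamma\varsigma \leq 1-\gamma$. The only mildly delicate point is the book-keeping: a single inequality has to govern simultaneously the self-mapping property, the contraction constant, and the norm bound on $v$, and the specific form of $\varsigma$ given in the hypothesis is designed precisely so that this holds with a uniform margin controlled by $\epsilon_1$.
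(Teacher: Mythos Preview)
Your argument is correct. The inversion trick $(Ty)_i = H_i^{-1}x_{i+1} - H_i^{-1}\phi_i(x_i)$ is exactly the right move in the purely expanding case, and your single governing inequality $2\gamma\varsigma \le 1-\gamma$ does indeed control all three things you claim: the self-map property of $T$ on $Y_r$, the strict contraction constant $\gamma(1+\varsigma)<1$, and the bound $\|v\|\le\delta/\varsigma$ via the standard Banach estimate. The verification $2\gamma\varsigma = \gamma(1-\gamma)(1-\epsilon_1)/(1+\gamma) < 1-\gamma$ is immediate as you say.

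The paper itself does not prove this lemma: it simply records it as ``a simple consequence of Gan~[Theorem~2.3]'' and omits the argument. Gan's theorem treats the general hyperbolic (contracting~$\oplus$~expanding) situation, where one must handle both a forward-contracting block and a backward-contracting block and then glue; your argument is the specialization to the case where the contracting subbundle is trivial, so only the inverse-cocycle contraction survives. What you gain is a fully self-contained proof with transparent constants; what the citation buys is brevity and the observation that the lemma sits inside a broader shadowing framework. One small remark: uniqueness is established only on $Y_r$ (where $\Phi$ is actually defined), not on all of $Y$, but since $\delta\varsigma^{-1}\le r$ this is exactly what the statement requires.
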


This statement is a simple consequence of Gan~\cite[Theorem~2.3]{G}.
So we omit its proof here.

Let $\gamma\in(0,1)$ be arbitrarily given. A positive number string
$(b_i)_{i=0}^{\ell-1}$ of length $\ell\ge1$, is called
$\gamma$-expanding if there holds $b_i\ge\gamma^{-1}$ for all
$i=0,\ldots,\ell-1$. It is called $\gamma$-quasi-expanding if there
holds the condition: $\prod_{i=1}^{k} b_{\ell-i}\ge \gamma^{-k}$ for
all $k=1,\ldots,\ell$.

A string of positive numbers $(c_i)_{i=0}^{\ell-1}$ is called
well-adapted to a $\gamma$-quasi-expanding string
$(b_i)_{i=0}^{\ell-1}$, provided that $\prod_{i=0}^{\ell-1}c_i=1$
and $\prod_{i=0}^{k}c_i\le1$ for $k=0,\ldots,\ell-2$ if $\ell\ge 2$
and $(b_i/c_i)_{i=0}^{\ell-1}$ is $\gamma$-expanding.

Then, the following is a special case of the combinatorial lemma of
Liao~\cite{L79}, also see \cite{G, Dai-TMJ}.

\begin{lemma}\label{lem7.2}
Let $\gamma\in(0,1)$ be arbitrarily given. Any
$\gamma$-quasi-expanding string $(b_i)_{i=0}^{\ell-1}$ of length
$\ell\ge 2$ has a well-adapted string $(c_i)_{i=0}^{\ell-1}$ such
that $\min\{\gamma b_i, 1\}\le c_i\le b_i$ for all $0\le i<\ell$.
\end{lemma}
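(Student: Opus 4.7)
My plan is to pass to logarithmic variables and recast the claim as a feasibility problem for partial sums, then solve it by a left-to-right greedy allocation. Writing $\alpha_j=\log(\gamma b_{\ell-j})$ and $d_j=\log c_{\ell-j}$ for $j=1,\ldots,\ell$, the $\gamma$-quasi-expanding hypothesis becomes $S_k:=\sum_{j=1}^{k}\alpha_j\ge 0$ for every $k=1,\ldots,\ell$, while the defining conditions of being well-adapted translate to $d_j\le\alpha_j$, $\sum_{j=1}^{\ell}d_j=0$, and $\sum_{j=1}^{k}d_j\ge 0$ for $k=1,\ldots,\ell-1$. The lower bound $\min\{\gamma b_i,1\}\le c_i$ asked for in the conclusion becomes $\min\{\alpha_j,0\}\le d_j$, and the upper bound $c_i\le b_i$ follows for free from $d_j\le\alpha_j$.

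Next I would split the indices into $F=\{j:\alpha_j\le 0\}$, on which the two-sided constraint forces $d_j=\alpha_j$, and $J=\{j:\alpha_j>0\}$, on which $d_j$ is free in $[0,\alpha_j]$. Setting $A_k^-=\sum_{j\le k,\,j\in F}\alpha_j\le 0$ and $A_k^+=\sum_{j\le k,\,j\in J}\alpha_j\ge 0$, the task reduces to choosing non-negative boosts $e_j\in[0,\alpha_j]$ at $j\in J$ whose running totals $E_k:=\sum_{j\le k,\,j\in J}e_j$ satisfy $E_k\ge|A_k^-|$ for each $k$ and $E_\ell=|A_\ell^-|$. I would define these boosts greedily: process $j$ from $1$ to $\ell$, and at each $j\in J$ set $e_j=\min\{\alpha_j,\,|A_\ell^-|-E_{j-1}\}$. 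By construction $E_k\le|A_\ell^-|$ throughout, and $E_\ell=|A_\ell^-|$ because the total positive budget satisfies $A_\ell^+\ge|A_\ell^-|$ (the $k=\ell$ case of the quasi-expanding inequality), which forces the cap to be reached. The partial-sum requirement $E_k\ge|A_k^-|$ follows from a two-case analysis: either the cap has not yet been reached at step $k$, so $E_k=A_k^+\ge|A_k^-|$ by the $k$-th quasi-expanding inequality, or it has, giving $E_k=|A_\ell^-|\ge|A_k^-|$.

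Exponentiating recovers the required $(c_i)_{i=0}^{\ell-1}$. The main obstacle is bookkeeping rather than a conceptual one: one must confirm that the forced contributions on $F$ never accumulate a deficit that the flexible contributions on $J$ cannot compensate for, and this is exactly the content of the rewriting $A_k^+\ge|A_k^-|$ of the quasi-expanding hypothesis. A more naive induction on $\ell$ by peeling off the first or last coordinate runs into awkward boundary sub-cases (in particular when $\gamma b_0<1$ and $\gamma b_{\ell-1}>1$ simultaneously, where the induction hypothesis on a shorter well-adapted string cannot be invoked directly), which is why the direct greedy construction above seems the cleanest route.
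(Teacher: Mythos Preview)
Your argument is correct. The logarithmic change of variables is set up accurately: the quasi-expanding hypothesis becomes $S_k\ge 0$, the well-adapted conditions become $d_j\le\alpha_j$, $\sum_{j\le k}d_j\ge 0$ for $k<\ell$, and $\sum_{j\le\ell}d_j=0$ (the last via the identity $\sum_{j=\ell-k}^{\ell}d_j=-\sum_{j=1}^{\ell-k-1}d_j$), and the additional bounds $\min\{\gamma b_i,1\}\le c_i\le b_i$ translate exactly as you say. On $F$ the constraints pin $d_j=\alpha_j$; on $J$ the greedy cap $e_j=\min\{\alpha_j,|A_\ell^-|-E_{j-1}\}$ keeps $E_k\le|A_\ell^-|$ by induction, forces $E_\ell=|A_\ell^-|$ since $A_\ell^+\ge|A_\ell^-|$, and your two-case verification of $E_k\ge|A_k^-|$ is sound (the key inequality $|A_\ell^-|\ge|A_k^-|$ in the second case holding because $k\mapsto A_k^-$ is non-increasing).

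As for comparison with the paper: the paper does not give its own proof of this lemma but simply cites it as a special case of Liao's combinatorial lemma (with further references to Gan and to Dai). Your greedy construction is a direct and self-contained substitute for that citation; nothing in the paper's later arguments depends on any particular form of the $(c_i)$, only on the stated inequalities, so your construction slots in without change.
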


The following lemma is standard:

\begin{lemma}\label{lem7.3}
Given any $f\in\mathrm{Diff}_{\mathrm{loc}}^1(M^n)$. For any
$\varepsilon,\tau,\hat{\varsigma}>0$ there exists a number $r$ with
$0<r\le\varepsilon$ such that if $x,y\in M^n$ satisfy
$\textsl{d}(f(x),y)\le r$, then the lift of $f$ at $(x,y)$
\begin{equation*}
\Phi_{\!x\rightsquigarrow y}=\exp_{\!y}^{-1}\circ
f\circ\exp_{\!x}\colon T_{\!x}M^n(r)\rightarrow T_{\!y}M^n
\end{equation*}
can be well defined such that $\Phi_{\!x\rightsquigarrow
y}=H_{\!xy}+\phi_{\!xy}$ where
$\mathrm{Lip}(\phi_{\!xy})\le\hat{\varsigma}$ and where $H_{\!xy}$
is a linear isomorphism satisfying
\begin{equation*}
1-\tau\le\frac{\|H_{\!xy}\|_{\mathrm{co}}}{\|D_{\!x}f\|_{\mathrm{co}}}\le
1+\tau.
\end{equation*}
\end{lemma}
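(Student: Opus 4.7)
The plan is to take $H_{xy}:=D_{0}\Phi_{x\rightsquigarrow y}$ and $\phi_{xy}:=\Phi_{x\rightsquigarrow y}-H_{xy}$, and then extract every required inequality from uniform continuity arguments supplied by compactness of $M^{n}$ combined with the $\mathrm{C}^{1}$ regularity of $f$. Fix once and for all a background Riemannian metric on $M^{n}$; by compactness of $M^{n}$ there is a uniform injectivity radius $\rho_{0}>0$ such that $\exp_{z}\colon T_{z}M^{n}(\rho_{0})\to M^{n}$ is a diffeomorphism onto its image for every $z\in M^{n}$. Since $f$ is a $\mathrm{C}^{1}$-class local diffeomorphism of a compact manifold, $f$ is globally Lipschitz with some constant $L\ge1$, the derivative $Df$ is uniformly continuous, and so is the two-variable map $(z,w)\mapsto D_{w}\exp_{z}^{-1}$ wherever $\textsl{d}(z,w)<\rho_{0}$.

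\textbf{Good definition of the lift and the choice of $H_{xy}$.} First I would pick $r_{1}\in(0,\min\{\varepsilon,\rho_{0}\}]$ small enough that $(L+1)r_{1}<\rho_{0}$. Then for $\textsl{d}(f(x),y)\le r\le r_{1}$ and $v\in T_{x}M^{n}(r)$, the triangle inequality $\textsl{d}(f(\exp_{x}v),y)\le L\|v\|+\textsl{d}(f(x),y)\le(L+1)r_{1}<\rho_{0}$ puts $f(\exp_{x}v)$ safely in the injectivity neighborhood of $y$, so $\Phi_{x\rightsquigarrow y}$ is well defined on $T_{x}M^{n}(r)$. Using $D_{0}\exp_{x}=\mathrm{Id}$, the chain rule gives
$$H_{xy}=D_{0}\Phi_{x\rightsquigarrow y}=D_{f(x)}\exp_{y}^{-1}\circ D_{x}f.$$
Since $D_{y}\exp_{y}^{-1}=\mathrm{Id}$ and $(z,w)\mapsto D_{w}\exp_{z}^{-1}$ is uniformly continuous on the compact domain $\{(z,w):\textsl{d}(z,w)\le r_{1}\}$, by further shrinking $r$ I can arrange $\|D_{f(x)}\exp_{y}^{-1}-\mathrm{Id}\|<\tau$ uniformly in $(x,y)$ subject to $\textsl{d}(f(x),y)\le r$. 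Because $f$ has no critical points and $M^{n}$ is compact, $\|D_{x}f\|_{\mathrm{co}}$ is uniformly bounded away from $0$, so $H_{xy}$ is a linear isomorphism and sub/super-multiplicativity of the co-norm yields the required
$$1-\tau\le\frac{\|H_{xy}\|_{\mathrm{co}}}{\|D_{x}f\|_{\mathrm{co}}}\le 1+\tau.$$

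\textbf{Lipschitz control on $\phi_{xy}$.} From $\phi_{xy}(v)=\Phi_{x\rightsquigarrow y}(v)-H_{xy}(v)$ one has $D_{v}\phi_{xy}=D_{v}\Phi_{x\rightsquigarrow y}-D_{0}\Phi_{x\rightsquigarrow y}$, so the mean-value inequality gives
$$\mathrm{Lip}(\phi_{xy})\le\sup_{v\in T_{x}M^{n}(r)}\|D_{v}\Phi_{x\rightsquigarrow y}-D_{0}\Phi_{x\rightsquigarrow y}\|.$$
Expanding $D_{v}\Phi_{x\rightsquigarrow y}=D_{f(\exp_{x}v)}\exp_{y}^{-1}\circ D_{\exp_{x}v}f\circ D_{v}\exp_{x}$ via the chain rule and invoking uniform continuity of $Df$ and of the derivatives of the exponential charts across $\{(x,y):\textsl{d}(f(x),y)\le r_{1}\}$, this supremum tends to $0$ uniformly in $(x,y)$ as $r\to 0$. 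Choosing $r\in(0,r_{1}]$ small enough that it does not exceed $\hat\varsigma$ then produces an $r$ satisfying all three requirements simultaneously.

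\textbf{Main obstacle.} The argument is essentially routine; the only genuine work is bookkeeping the uniform-in-$(x,y)$ dependence of the chart estimates, and this is fully absorbed by the compactness of $M^{n}$ together with the $\mathrm{C}^{1}$ hypothesis on $f$. No further dynamical input is required at this stage.
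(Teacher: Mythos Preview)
Your argument is correct and is precisely the standard one: the paper itself omits any proof of this lemma, introducing it only with the remark ``The following lemma is standard.'' Your choice $H_{xy}=D_{0}\Phi_{x\rightsquigarrow y}$, the chain-rule expansion, and the uniform-continuity/compactness bookkeeping are exactly what is implicitly being invoked, so there is nothing to compare against.
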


In what follows, let
\begin{equation*}
K={\sup}_{x\in M}\left\{\|D_{\!x}f\|,
\|(D_{\!x}f)^{-1}\|\right\}.
\end{equation*}

Now, we are ready to prove the theorem.

\begin{proof}[Proof of Theorem~\ref{thm2.1}]
We need to prove only the closing property. Let $\lambda>0$ and
$\varepsilon>0$ be arbitrarily given as in the statement of
Theorem~\ref{thm2.1}.

Let
$\left(\left(x_i,f^{n_i}(x_i)\right)\right)_{-\infty}^{+\infty}$
be a $\lambda$-quasi-expanding $\delta$-pseudo-orbit of $f$  in $M^n$
where $\delta>0$ be arbitrarily given; i.e.,
$\left(x_i,f^{n_i}(x_i)\right)$ is a $\lambda$-quasi-expanding string
of length $n_i$ with $\textsl{d}(f^{n_i}(x_i),x_{i+1})<\delta$ and
$n_i\ge 1$ for each $i\in\mathbb{Z}$. Write
\begin{equation*}
N_i=\left\{
\begin{array}{ll}
0& \textrm{if }i=0,\\
n_0+n_1+\cdots+n_{i-1} & \textrm{if }i>0,\\
-n_i-n_{i+1}-\cdots-n_{-1}& \textrm{if }i<0.
\end{array}
\right.
\end{equation*}
Let $y_j=f^{j-N_i}(x_i)$ and $X_j=T_{\!y_j}M^n$ for any $N_i\le
j<N_{i+1}$ and any $i\in\mathbb{Z}$. Then $(y_j)_{j\in\mathbb{Z}}$
is a $\delta$-pseudo-orbit of $f$ in $M^n$.

Next, we will $\varepsilon$-shadow $(y_j)_{j\in\mathbb{Z}}$ by a
real orbit of $f$ if $\delta$ is sufficiently small.

It is easily seen that there can be found two numbers $\tau\in(0,1)$
and $\gamma\in(0,1)$ such that
\begin{equation*}
(1-\tau)\exp{\lambda}\ge \gamma^{-1}.
\end{equation*}
We now take $\epsilon>0$ small enough to satisfy
\begin{equation*}
\epsilon_1:=\frac{2\epsilon(1+\gamma)}{1-\gamma}<1.
\end{equation*}
Let
\begin{equation*}
\varsigma=\frac{(1-\gamma)(1-\epsilon_1)}{2(1+\gamma)}\quad
\textrm{and}\quad \delta\in(0,r\varsigma]
\end{equation*}
where $r$ is determined by Lemma~\ref{lem7.3} in correspondence with
the triplet $(\varepsilon,\tau,\hat{\varsigma})$ where
$\hat{\varsigma}=\varsigma/(K\exp \lambda)$. Then, according to
Lemma~\ref{lem7.3} the lift $\Phi_{\!y_j\rightsquigarrow y_{j+1}}$
of $f$ at $(y_j,y_{j+1})$
\begin{equation*}
\Phi_j:=\exp_{\!y_{j+1}}^{-1}\circ f\circ\exp_{\!y_j}\colon
X_j(r)\rightarrow X_{j+1}\qquad (N_i\le j\le N_{i+1}-1)
\end{equation*}
has the form $\Phi_j=H_j+\phi_j$ such that
\begin{equation*}
\|H_j\|_{\mathrm{co}}\ge(1-\tau)\|D_{\!y_j}f\|_{\mathrm{co}},\quad
\mathrm{Lip}(\phi_j)\le\frac{\varsigma}{K\exp\lambda}\qquad
\end{equation*}
for $j=N_i,\ldots,N_{i+1}-1$, and
\begin{equation*}
\phi_j(\textbf{0})=\textbf{0}\qquad\textrm{for
}j=N_i,\ldots,N_{i+1}-2,
\end{equation*}
and
\begin{equation*}
\|\phi_j(\textbf{0})\|\le\delta\qquad \textrm{for }j=N_{i+1}-1.
\end{equation*}
So, $\left(\|H_j\|_{\mathrm{co}}\right)_{j=N_i}^{N_{i+1}-1}$ is a
$\gamma$-quasi-expanding string because the string
$\left(\|D_{\!y_j}f\|_{\mathrm{co}}\right)_{j=N_i}^{N_{i+1}-1}$ is
$e^{-\lambda}$-quasi-expanding by the hypothesis of the theorem. And
hence corresponding to it, there can be found from
Lemma~\ref{lem7.2} a well-adapted string
$(c_j)_{j=N_i}^{N_{i+1}-1}$ of length $N_{i+1}-N_i$ such that
$K^{-1}\exp(-\lambda)\le c_j\le K$ for all $N_i\le j\le N_{i+1}-1$.

For any $i\in\mathbb{Z}$ and any $N_i\le j\le N_{i+1}-1$ let
\begin{equation*}
g_j=\prod_{k=N_i}^{j}c_k,\quad\widetilde{H}_j=c_j^{-1}H_j, \quad
\tilde{\phi}_j(v)=g_j^{-1}\phi_j(g_{j-1}v),
\end{equation*}
and further define
\begin{equation*}
\widetilde{\Phi}_j=\widetilde{H}_j+\tilde{\phi}_j\colon
X_j\rightarrow X_{j+1}.
\end{equation*}
Denote by $\Psi_j=\Phi_j\cdots\Phi_{N_i}$ and
$\widetilde{\Psi}_j=\widetilde{\Phi}_j\cdots\widetilde{\Phi}_{N_i}$
for $N_i\le j\le N_{i+1}-1$. Then we have
$\widetilde{\Psi}_j=g_j^{-1}\Psi_j$. Note that
$g_{N_{i+1}-1}^{}=1$ and
$\widetilde{\Psi}_{N_{i+1}-1}=\Psi_{N_{i+1}-1}$ for any $i$.

Thus,
$\mathrm{Lip}(\tilde{\phi}_j)=g_j^{-1}\mathrm{Lip}({\phi}_jg_{j-1})=c_j^{-1}\mathrm{Lip}({\phi}_j)\le
\varsigma$ for all $j$ and $\tilde{\phi}_j(\textbf{0})=\mathbf{0}$
for any $N_i\le j<N_{i+1}-1$ and
$\|\tilde{\phi}_j(\textbf{0})\|=\|\phi_j(\mathbf{0})\|\le\delta$ for
$j=N_{i+1}-1$. Then, according to Lemma~\ref{lem7.1},
$\widetilde{\Phi}=(\widetilde{\Phi}_j)\colon Y(r)\rightarrow Y$
where $Y=\prod_{j\in\mathbb{Z}}X_j$, has a unique fixed point
$\tilde{v}=(\tilde{v}_j)$ such that $\|\tilde{v}\|\le
\delta\varsigma^{-1}<\varepsilon$.

Let $v_{N_i}=\tilde{v}_{N_i}$ for all $i$ and we recursively define
\begin{equation*}
v_j=\Phi_{j-1}(v_{j-1})\qquad \forall N_i<j<N_{i+1}-1.
\end{equation*}
To ensure this, we need to check that
$\|v_j\|\le\delta\varsigma^{-1}$, i.e. $v_{j-1}\in X_{j-1}(r)$.
Indeed, since
\begin{equation*}
v_j=\Psi_{j-1}(v_{N_i})=g_{j-1}\widetilde{\Psi}_{j-1}(v_{N_i})=g_{j-1}\tilde{v}_j,
\end{equation*}
we have $\|v_j\|\le\|\tilde{v}_j\|\le\delta\varsigma^{-1}$. From
\begin{equation*}
v_{N_i+1}=\tilde{v}_{N_i+1}=\widetilde{\Psi}_{N_{i+1}-1}(\tilde{v}_{N_i})=\Psi_{N_{i+1}-1}(v_{N_i})=\Phi_{N_{i+1}-1}(v_{N_{i+1}-1}),
\end{equation*}
we see that $v=(v_j)$ is a fixed point of $\Phi=(\Phi_j)$ and
$\|v\|\le \delta\varsigma^{-1}<\varepsilon$. Let
$z=\exp_{\!y_0}(v_0)$. Thus, $z$ can $\delta\varsigma^{-1}$-shadow
$\{y_j\}$.

Now let
$\left(\left(x_i,f^{n_i}(x_i)\right)\right)_{-\infty}^{+\infty}$
be periodic, i.e., there is some $k\ge 0$ such that $x_{i+k+1}=x_i$
and $n_{i+k+1}=n_{i}$ for all $i$. Define $\tilde{w}$ in the way
$(\tilde{w})_j=(\tilde{v})_{N_{k+1}+j}$ for any $j\in\mathbb{Z}$.
Since both $\tilde{v}$ and $\tilde{w}$ are fixed points of
$\widetilde{\Phi}$ in $Y(\delta\varsigma^{-1})$,
$\tilde{v}=\tilde{w}$ by the uniqueness. Thus, $v=w$ and further $z$
has period $N_{k+1}$.

This ends the proof of Theorem~\ref{thm2.1}.
\end{proof}

\end{document}